\documentclass{article}

\usepackage[letterpaper]{geometry}
\usepackage{setspace}
\usepackage{lmodern} % Use the lmodern font

\usepackage{amsmath, amsfonts, amsthm, amsbsy, mathtools, mathrsfs}
 \usepackage{amssymb}

\usepackage{tikz}
\usetikzlibrary{positioning, arrows.meta, shapes.geometric, fit, calc, backgrounds}

 \usepackage{float}
\newtheorem{lemma}{Lemma}[section]
\newtheorem{theorem}{Theorem}[section]

\newtheorem{assumption}{Assumption}[section]

\usepackage{graphicx}
\usepackage{xcolor}
\usepackage{tikz}
\usetikzlibrary{shapes,arrows.meta,positioning}
\usetikzlibrary{arrows.meta}
\usepackage{listings}
\usepackage{algorithm}
\usepackage{algpseudocode}
\usepackage{caption}
\usepackage{subcaption}
\usepackage{cprotect}
\usepackage{dsfont}
\usepackage{tabularx}
\usepackage{array}
\usepackage{multirow}
\usepackage{booktabs}
\usepackage{siunitx}

\usepackage[numbers]{natbib}

\usepackage{authblk}

\usepackage{csvsimple}
\usepackage[normalem]{ulem}
\usepackage{lipsum} % For dummy text
\usepackage{hyperref}

\title{A Multi-Level Machine Learning Framework  for Inverse Scattering Problems with Multi-Frequency Data}
\author{Yi Liu, Yanzhao Cao, Junshan Lin, Yimin Zhong\\
\smallskip
Department of Mathematics and Statistics, Auburn University}
\date{}

\begin{document}
\maketitle

\begin{abstract}
In this work, we propose a multi-level machine learning framework for solving inverse scattering problems with multi-frequency data. The multi-level neural network is built along the frequency axis of the scattering problem, wherein at each fixed frequency, a new level of network is added to the existing architecture to update the reconstruction. By marching through the frequency levels, the proposed multi-level computational framework is able to obtain higher-order Fourier modes of the imaging target as the depth of the neural network grows and higher-frequency data are used. Furthermore, the overall learning problem is decomposed into a sequence of simpler local tasks, each associated with a single frequency. This decomposition significantly reduces the complexity of the optimization problem and mitigates the risk of convergence to undesirable local minima, resulting in a robust and reliable training procedure for solving inverse scattering problems. We conduct various numerical experiments for the inverse source scattering problem and the inverse medium scattering problem to illustrate the effectiveness and robustness of the proposed machine learning framework. In addition, theoretical analysis in the neural tangent kernel regime shows that the proposed multi-level architecture progressively recovers the higher-order Fourier components of the imaging target.
\end{abstract}

\noindent\textbf{Keywords:} inverse scattering, multi-frequency data, multi-level learning, machine learning

\section{Introduction}
Inverse scattering problems seek to recover unknown scattering objects or radiating sources from measurements of acoustic or electromagnetic waves in the near or far field. They arise in a wide range of applications, such as medical imaging, remote sensing, nondestructive testing, and geophysical exploration \cite{colton_2019, Isakov2017, Uhlmann2014}.
These problems are challenging in realistic applications because they are generally ill-posed. Furthermore, in many cases, the forward map is nonlinear, which further complicates the reconstruction process.

Over the years, a variety of computational methods have been developed for solving inverse scattering problems. These methods can be grouped into two categories: optimization-based iterative methods and optimization-free direct reconstruction methods:
\begin{itemize}
    \item [(i)] The iterative methods formulate the reconstruction procedure as PDE-constrained optimization problems and update the unknown iteratively using gradient descent methods. See, for example, \cite{norton1999, kelley1999, hohage2001, plessix2006, hinze2008}.
    In general, the adjoint-state techniques are used to compute the gradient of the objective functional, which requires solving the forward problem together with the adjoint problem. More recently, to overcome the ill-posedness and the presence of local minima associated with the optimization problem, a recursive linearization method that progressively updates the reconstruction from low to high frequencies was developed \cite{BaoLiLinTriki2015, BorgesGillmanGreengard2017,chen1997}.

    \item [(ii)] The optimization-free methods rely on the construction of imaging functionals or the spectral decomposition of the forward operator to identify the unknown scatterers,  such as the linear sampling method and the factorization method \cite{colton1996, Kirsch1998,  haddar2002, LukePotthast2003, cakoni2006, GriesmaierSchmiedecke2017, CakoniHaddarLechleiter2019}.
    Although computationally attractive, these methods generally recover only geometric information about the imaging target, rather than its quantitative properties.
\end{itemize}

With the rapid development of deep learning, computational methods based on neural networks (NNs) have become an active direction for inverse problems. These data-driven approaches learn prior information about the imaging target from training data and shift the computational burden to the training stage. More specifically, once the NN is trained, the reconstruction can be performed efficiently using the trained network \cite{mccann2017,lucas2018}. Various learning approaches have been developed for solving inverse  problems. For instance, in learning-assisted objective-function approaches, NNs are used to learn certain components of iterative reconstruction within an optimization-based framework, such as descent directions and auxiliary variables \cite{adler2017,guo2019,sanghvi2019}. In
physics-assisted learning approaches, the physical model is incorporated into the learning process,  often through a physics-based approximate inverse that provides a preliminary reconstruction, which is then refined by NNs 
\cite{sun2018,xiao2019,khoshdel2019,wei2019dl,li2019deepnis}.
For inverse scattering problems,
NN architectures have been constructed by using scattering-operator structures, back-propagation procedures, or progressive multi-frequency designs \cite{KhooYing2019, FanYing2022,ZhangZepedaLi2024,Melia2025}. We refer  the reader to \cite{chen2020review} for a more detailed review of these methods and to \cite{Jiang2024, MengZhang2024, Zhang2025} for other 
related developments.

In this work, we propose a multi-level learning framework for inverse scattering problems with multi-frequency measurements. The computational approach combines deep learning with ideas from recursive reconstruction. In this framework,
the multi-level NN architecture is built along the frequency axis of the scattering problem, where a new subnetwork is introduced into the existing computational architecture at each frequency level to update the model (see Figure \ref{fig:architecture} in Section \ref{sc3}). The learning process of the NN is performed via a frequency continuation procedure from low to high frequencies, wherein the newly added subnetwork is trained using the measurement data at each fixed frequency.  There are two main advantages of the proposed method: (i) the multi-level NN is able to recover the higher-order Fourier components of the imaging target as the frequency increases, thereby producing a convergent reconstruction;
(ii) the architectural design yields a robust learning procedure by marching from low to high frequencies.
In particular, the output of the network trained at the previous frequency is incorporated as part of the input to the network at the current frequency. Consequently, the learning problem at each frequency is reduced to a local task, which substantially simplifies the optimization process; see the discussion in Section \ref{sc3}.

To understand the convergence of the proposed computational method, we also provide a theoretical interpretation of the machine learning framework in the neural tangent kernel (NTK) regime~\cite{jac2018}.  Specifically, We show that, at each frequency level, the Fourier transform of the network output agrees with that of the imaging target on a frequency set determined jointly by the current-frequency data and the Fourier information inherited from the previous frequency levels.
Therefore, the computational method leads to a more accurate reconstruction as the frequency increases.

The proposed NN architecture shares a similar spirit with the multi-grade learning framework \cite{xu2025multi} and other multi-stage learning strategies, where a complicated learning task is decomposed into simpler local learning tasks \cite{bengio2007greedy,belilovsky2019greedy}. Here we employ a robust learning strategy based on the frequency-continuation idea for the recursive linearization methods
\cite{BaoLiLinTriki2015}. Such a training strategy is robust because the training at each frequency level can be carried out in a reliable and efficient manner, and by marching along the frequencies, the overall procedure reduces the risk of being trapped in local minima when solving the optimization problem. Furthermore, as discussed above, it is promising to obtain a convergent reconstruction by recovering the higher-order Fourier modes of the imaging target as the frequency increases.

The remainder of this paper is organized as follows. In Section \ref{sc2}, we introduce the inverse scattering problems and focus on two representative settings: inverse source scattering and inverse medium scattering. Section \ref{sc3} presents the multi-level machine learning computational framework, where we describe the NN architecture and the training algorithm. Various numerical examples are shown in Section \ref{sc4} to illustrate the effectiveness and robustness of the proposed framework. In Section \ref{sc5}, we present the theoretical analysis of learning dynamics in the NTK regime. Finally a conclusion is given in Section \ref{sc6}. 

\section{Problem Setting} \label{sc2}
We consider two representative inverse scattering problems: the inverse source problem (ISP) and the inverse medium problem (IMP). 
The former seeks to recover
a source function from the radiated field, while the latter aims to recover the
 medium property of an object from the scattered field.

\medskip
\noindent\textbf{ISP.} Let
\(
\Omega=\{\mathbf r\in\mathbb R^2:\ |\mathbf r|<R\}
\), and \(f^\dagger\) be a real-valued source function compactly supported in \(\Omega\).  For the wavenumber \(\kappa>0\), the radiated field \(\psi(\kappa,\mathbf r)\) generated by \(f^\dagger\) satisfies
\begin{equation}\label{1}
\Delta \psi+\kappa^2\psi=-f^\dagger
\qquad \text{in } \mathbb{R}^2,
\end{equation}
together with the Sommerfeld radiation condition
\[
\lim_{|\mathbf r|\to\infty} |\mathbf r|^{1/2}
\left(
\frac{\partial \psi}{\partial |\mathbf r|}-i\kappa\psi
\right)=0.
\]
The ISP is to recover \(f^\dagger\) from the boundary measurement data of the radiated field \(\psi\) on \(\partial\Omega\). For a fixed wavenumber \(\kappa\), the data are denoted by \(D_\kappa\).

\medskip
\noindent\textbf{IMP.} Let \(q^\dagger>-1\) be a real-valued inhomogeneity of the medium compactly supported in \(\Omega\). Given the wavenumber \(\kappa>0\), the total field \(\phi^{\mathrm{total}}(\kappa,\mathbf r)\) satisfies
\begin{equation}
\Delta \phi^{\mathrm{total}}+\kappa^2(1+q^\dagger)\phi^{\mathrm{total}}=0
\qquad \text{in } \mathbb{R}^2.
\end{equation}
 Let
\(
\phi^{\mathrm{inc}}(\kappa, \mathbf r)=e^{i\kappa \mathbf p\cdot \mathbf r}
\)
be an incident plane wave, where \(\mathbf p=(\cos\beta,\sin\beta)\in\mathbb S^1\) for a given \(\beta\in[0,2\pi)\).
We decompose the total field as
\[
\phi^{\mathrm{total}}=\phi^{\mathrm{inc}}+\psi,
\]
where \(\psi(\kappa,\mathbf r)\) is the scattered field, which satisfies
\begin{equation}\label{3}
\Delta \psi+\kappa^2(1+q^\dagger)\psi
=
-\kappa^2 q^\dagger \phi^{\mathrm{inc}}
\qquad \text{in } \mathbb{R}^2,
\end{equation}
together with the Sommerfeld radiation condition
\[
\lim_{|\mathbf r|\to\infty} |\mathbf r|^{1/2}
\left(
\frac{\partial \psi}{\partial |\mathbf r|}-i\kappa\psi
\right)=0.
\]
The IMP aims to recover \(q^\dagger\) from the boundary measurements of the scattered field \(\psi\) on \(\partial\Omega\). For a given wavenumber \(\kappa\) and incident direction \(\mathbf p\), the corresponding data are denoted by \(D_{\kappa,\mathbf p}\). For notational simplicity, we also write \(D_\kappa\) when \(\mathbf p\) is
fixed or clear from the context.

\medskip
 \noindent\textbf{A unified formulation with multi-frequency data.} 
We formulate the two inverse problems above in a unified framework. To this
end, let $\mathcal U$ be the space of admissible functions, and let $a^\dagger\in\mathcal U$ be the quantity to be
reconstructed. For a given wavenumber $\kappa>0$, let $\psi$ denote the
solution of the corresponding forward problem associated with $a^\dagger$.
Namely, in the ISP, $a^\dagger$ denotes the source term $f^\dagger$, and
$\psi$ solves \eqref{1}; in the IMP, $a^\dagger$ denotes the medium
inhomogeneity $q^\dagger$, and $\psi$ is the scattered field solving \eqref{3} for the prescribed incident
field.

Given a measurement space $\mathcal Y$, we define the forward operator
\[
    \mathscr F_\kappa:\mathcal U\to\mathcal Y
\]
through the corresponding forward model, that is \eqref{1} for the ISP and
\eqref{3} for the IMP. It maps the unknown \(a^\dagger\) to the measurement data \(D_\kappa\)
associated with \(\psi\) at \(\kappa\), namely,
\begin{equation}\label{4}
    D_\kappa
    =
    \mathscr F_\kappa(a^\dagger).
\end{equation}

We consider multi-frequency inverse scattering problems, in which the
corresponding measurements are given at a sequence of wavenumbers
\[
    0<\kappa_1<\kappa_2<\cdots<\kappa_N.
\]
Then
\begin{equation}\label{5}
     D_{\kappa_n}
     =
     \mathscr F_{\kappa_n}(a^\dagger),
     \qquad n=1,\ldots,N,
\end{equation}
are the measurement data at wavenumber $\kappa_n$. When noise is present, we write
\[
    D_{\kappa_n}^\delta
    =
    \mathscr F_{\kappa_n}(a^\dagger)+\epsilon_{\kappa_n},
    \qquad n=1,\ldots,N,
\]
where $\epsilon_{\kappa_n}$ denotes the measurement error. The goal is to
recover $a^\dagger$ from the multi-frequency data
\(
    \{D_{\kappa_n}\}_{n=1}^N
\)
or the noisy data
\(
    \{D_{\kappa_n}^\delta\}_{n=1}^N.
\)

In the settings of ISP and IMP considered in this work, the measurement data
\(D_\kappa\) are taken to be the Cauchy boundary data, namely,
\begin{equation}\label{eq:cauchy_data}
D_\kappa
=
\left(
\psi|_{\partial\Omega},
\partial_{\mathbf n}\psi|_{\partial\Omega}
\right),
\end{equation}
where \(\mathbf n\) is the unit outward normal vector on \(\partial\Omega\), and
\(\partial_{\mathbf n}\psi=\nabla\psi\cdot \mathbf n\) denotes the normal derivative on \(\partial\Omega\).

\section{Multi-Level Learning Framework}\label{sc3}

In this section, we present the multi-level learning framework for multi-frequency inverse scattering problems. The central idea is to organize the learning process along an increasing sequence of frequencies, with each
frequency represented by its corresponding wavenumber \(\kappa\). At each frequency level, a subnetwork is introduced to update the current approximation by taking two inputs:  the output from the previous level, which
incorporates information from all preceding frequencies, and a domain-defined feature constructed from the current-frequency data through the underlying physical model. As such, the learning task is reduced to successive local learning problems between consecutive frequency levels, improving the stability and efficiency of the training process.

\subsection{The Multi-Level Neural Network Architecture}
A direct end-to-end mapping from all frequency measurements to the imaging target is difficult to learn, since the corresponding objective functional may contain many undesirable local minima. To overcome this challenge and make effective use of the multi-frequency data, we propose a learning process that proceeds along the frequency axis, decomposing the global learning task into a sequence of simpler local learning tasks, each associated with a subnetwork corresponding to one frequency level.
\begin{figure}[htbp]
\centering
\resizebox{0.86\textwidth}{!}{%
\begin{tikzpicture}[
basebox/.style={
    draw,
    rounded corners=1.5mm,
    align=center,
    thick,
    font=\small,
    inner sep=4pt,
    text=black!90
},
redbox/.style={
    basebox,
    fill=red!9,
    draw=red!42!black,
    minimum width=1.35cm,
    minimum height=0.65cm
},
bluebox/.style={
    basebox,
    fill=blue!8,
    draw=blue!42!black,
    minimum width=2.5cm,
    minimum height=1.05cm
},
greenbox/.style={
    basebox,
    fill=green!9,
    draw=green!42!black,
    minimum width=1.5cm,
    minimum height=0.58cm
},
targetbox/.style={
    draw=orange!42!black,
    fill=orange!10,
    ellipse,
    minimum width=2.2cm,
    minimum height=0.75cm,
    thick,
    font=\small,
    text=black!90
},
levelbox/.style={
    draw=gray!48,
    rounded corners=3mm,
    thick,
    inner sep=8pt
},
arr/.style={
    -{Stealth[scale=1.0]},
    thick,
    draw=black!68
},
skiparr/.style={
    -{Stealth[scale=1.0]},
    thick,
    draw=blue!42!black,
    rounded corners=2mm
},
dasharr/.style={
    -{Stealth[scale=0.9]},
    dashed,
    draw=gray!62
}
]

% Level 1
\node[bluebox] (b1) {$\mathcal{N}_{\Theta_{1}}(\mathbf{x}_{1})$};
\node[redbox, above=0.48cm of b1] (r1) {$\tilde{a}_1$};
\node[greenbox, below=0.48cm of b1] (g1) {$a_1$};

% Initial input outside Level 1 block
\node[left=0.75cm of b1, font=\small, text=black!90] (a0) {$a_0$};
\node[above=0.03cm of a0, font=\scriptsize, text=gray!75] {Initial};

\node[levelbox, fit=(r1) (b1) (g1)] (box1) {};
\node[above=0.12cm of box1, font=\footnotesize, text=black!80] {$\kappa_1$};

% Level 2
\node[bluebox, right=1.35cm of b1] (b2) {$\mathcal{N}_{\Theta_{2}}(\mathbf{x}_{2})$};
\node[redbox, above=0.48cm of b2] (r2) {$\tilde{a}_2$};
\node[greenbox, below=0.48cm of b2] (g2) {$a_2$};
\node[levelbox, fit=(r2) (b2) (g2)] (box2) {};
\node[above=0.12cm of box2, font=\footnotesize, text=black!80] {$\kappa_2$};

% Dots
\node[right=0.75cm of b2, font=\Large, text=gray!70] (dots) {$\cdots$};

% Level N-1
\node[bluebox, right=0.75cm of dots] (bn1) {$\mathcal{N}_{\Theta_{N-1}}(\mathbf{x}_{N-1})$};
\node[redbox, above=0.48cm of bn1] (rn1) {$\tilde{a}_{N-1}$};
\node[greenbox, below=0.48cm of bn1] (gn1) {$a_{N-1}$};
\node[levelbox, fit=(rn1) (bn1) (gn1)] (boxn1) {};
\node[above=0.12cm of boxn1, font=\footnotesize, text=black!80] {$\kappa_{N-1}$};

% Level N
\node[bluebox, right=1.35cm of bn1] (bn) {$\mathcal{N}_{\Theta_N}(\mathbf{x}_N)$};
\node[redbox, above=0.48cm of bn] (rn) {$\tilde{a}_N$};
\node[greenbox, below=0.48cm of bn] (gn) {$a_N$};
\node[levelbox, fit=(rn) (bn) (gn)] (boxn) {};
\node[above=0.12cm of boxn, font=\footnotesize, text=black!80] {$\kappa_N$};

% Vertical arrows
\foreach \i in {1,2,n1,n} {
    \draw[arr] (r\i) -- (b\i);
    \draw[arr] (b\i) -- (g\i);
}

% Initial input arrow
\draw[skiparr] (a0.east) -- (b1.west);

% Cross-level transfer
\draw[skiparr] (g1.east) -- ++(0.65,0) |- (b2.west);
\draw[skiparr] (g2.east) -- ++(0.65,0) |- (dots.west);
\draw[skiparr] (dots.east) -- (bn1.west);
\draw[skiparr] (gn1.east) -- ++(0.65,0) |- (bn.west);

% Target and supervision
\path (box2.south) -- (boxn1.south) coordinate[midway] (midbottom);
\node[targetbox, below=1.6cm of midbottom] (target) {Target $a^\dagger$};

\draw[dasharr] (g1.south) -- (target.165);
\draw[dasharr] (g2.south) -- (target.140);
\draw[dasharr] (gn1.south) -- (target.40);
\draw[dasharr] (gn.south) -- (target.15);

% Top arrow
\coordinate (topleft) at ([yshift=0.9cm, xshift=-0.2cm]box1.north west);
\coordinate (topright) at ([yshift=0.9cm, xshift=0.2cm]boxn.north east);

\draw[-{Stealth[scale=1.1]}, line width=1.2pt, draw=gray!60]
    (topleft) -- (topright)
    node[midway, above=0.12cm, text=black!90, font=\small]
    {\(\kappa\)};

\end{tikzpicture}
}
\caption{Schematic illustration of the proposed multi-level machine learning framework. At each frequency level, the corresponding subnetwork uses the previous output and a transformed feature to update the approximation, which is then propagated
to the next level. }
\label{fig:architecture}
\end{figure}

Assume that the measurement data \(\{D_{\kappa_n} \}_{n=1}^N\) are available at $\{\kappa_n\}_{n=1}^N$. 
The $N$-level architecture constructed in this work is illustrated schematically in Figure~\ref{fig:architecture}. 
In this architecture, a subnetwork 
\(\mathcal N_{\Theta_n}\) is assigned to the \(n\)-th frequency level,
corresponding to the wavenumber \(\kappa_n\), where \(\Theta_n\) denotes the
trainable parameters of the subnetwork. Starting with an initial guess \(a_0\), the NN model is updated
recursively along the frequency axis, beginning at the lowest frequency corresponding to \(\kappa_1\).
 At the \(n\)-th frequency level, the subnetwork
\(\mathcal N_{\Theta_n}\) updates the approximation of \(a^\dagger\) by
combining the previous-level output  \(a_{n-1}\), which carries
the information accumulated from lower frequencies, with a feature \(\tilde a_n\) extracted from the current
measurement data \(D_{\kappa_n}\). More precisely, for \(n=1,2,\ldots,N\), the input \(\mathbf x_n\) to
\(\mathcal N_{\Theta_n}\) is given by
\[
    \mathbf x_n := (a_{n-1},\tilde a_n).
\]
The updated approximation \(a_n\) is then given by
\begin{equation}\label{7}
   a_n := \mathcal N_{\Theta_n}(\mathbf x_n),
   \qquad n=1,2,\ldots,N. 
\end{equation}
Each subnetwork \(\mathcal N_{\Theta_n}\) is trained so that its output \(a_n\)
approximates the target \(a^\dagger\) associated with the inverse problem
\eqref{5}. As the frequency increases from \(\kappa_{n-1}\) to \(\kappa_n\), the
newly added subnetwork \(\mathcal N_{\Theta_n}\) updates the previous
approximation \(a_{n-1}\) by incorporating the measurement data at
\(\kappa_n\). The resulting approximation \(a_n\) is expected to provide a
more accurate reconstruction than \(a_{n-1}\). Therefore, by marching from the
lowest frequency, equivalently \(\kappa_1\), to the highest frequency, equivalently \(\kappa_N\), the proposed architecture progressively builds a refined model
for solving the inverse problem.

With this architecture, it remains to specify how the data are used to construct the feature \(\widetilde a_n\) for the \(n\)-th subnetwork $N_{\Theta_n}$. If the measurement data is directly used as the network input, each network would have to learn both the transformation from the measurement space to the domain of the target \(a^\dagger\) and the reconstruction update. To avoid this additional computational burden, we exploit the mathematical structure of the underlying forward model to convert the measurement data \(D_{\kappa_n}\) into a domain-defined feature \(\widetilde a_n\), which is compatible with the output \(a_{n-1}\) from the previous frequency level.

We describe this data-to-domain transformation in a general form as follows.
For a fixed \(\kappa>0\), let \(D_\kappa\) be the measurement data defined in \eqref{4}. Using the underlying physical model, the measurement data and the target are related through an approximate relation of the form
\begin{equation}\label{8}
\mathscr J_{\kappa}(D_{\kappa})
\approx
\mathscr L_{\kappa}(a^\dagger),
\end{equation} 
where \(\mathscr J_\kappa\) is a transformation constructed from the measurement data \(D_\kappa\) and
\(\mathscr L_\kappa\) is an explicitly defined operator acting on the target \(a^\dagger\). The specific forms of \(\mathscr J_{\kappa}\) and \(\mathscr L_{\kappa}\) for the two inverse scattering problems under consideration will be given below. Moreover, a domain-defined feature \(\widetilde a_\kappa\) is constructed as
\begin{equation}\label{9}
    \tilde a_\kappa
    :=
    \mathscr R_\kappa \mathscr J_\kappa(D_\kappa),
\end{equation}
where \(\mathscr R_\kappa\) denotes an inverse-type transformation of
of \(\mathscr L_\kappa\).  The quantity \(\widetilde a_\kappa\) serves as some approximation of \(a^\dagger\) constructed from \(D_\kappa\). At the \(n\)-th frequency level, we set
\[
    \tilde a_n
    :=
    \tilde a_{\kappa_n}
    =
    \mathscr R_{\kappa_n}\mathscr J_{\kappa_n}(D_{\kappa_n}).
\] 
We now apply this construction, using the Cauchy boundary data defined in \eqref{eq:cauchy_data}, to the two model problems considered in this work: the ISP and the IMP.

\medskip
\noindent\textbf{ISP.} For a given $\theta\in[0,2\pi)$, we consider an auxiliary function
\begin{equation}\label{10}
\phi(\kappa,\mathbf r)=e^{\mathrm{i}\kappa \mathbf d\cdot \mathbf r}
,
\qquad
\mathbf d=(\cos\theta,\sin\theta)\in\mathbb S^1.  
\end{equation}
Multiplying \eqref{1} by \(\phi\) and applying Green's identity over \(\Omega\) gives
\begin{equation}\label{11}
\int_{\partial\Omega}
\left(
\psi\,\partial_{\mathbf n}\phi
-
\phi\,\partial_{\mathbf n}\psi
\right)\, \mathrm d s
=
\int_{\Omega} f^\dagger\,\phi\,\mathrm d\mathbf r.
\end{equation}
Accordingly, we define 
\[
\mathscr J_\kappa(D_\kappa):= \int_{\partial\Omega}\left(
\psi\,\partial_{\mathbf n}\phi
-
\phi\,\partial_{\mathbf n}\psi
\right)\, \mathrm d s
\quad \mbox{and}
\quad
\mathscr L_\kappa(f^\dagger):= \int_{\Omega} f^\dagger\,\phi\,\mathrm d\mathbf r.
\]
The corresponding inverse transformation \(\mathscr R_\kappa\) is then applied to construct the domain-defined feature  \(\tilde f_\kappa\) according to \eqref{9}.

\medskip
\noindent\textbf{IMP.}
Based on \eqref{3}, let \(\psi_B(\kappa, \mathbf r)\) denote the Born approximation of the scattered field, satisfying
\begin{equation}\label{12}
\Delta\psi_B+\kappa^2\psi_B
=
-\kappa^2 q^\dagger\,\phi^{\mathrm{inc}}
\qquad \text{in } \Omega.
\end{equation}
Recall that
\(
\phi^{\mathrm{inc}}(\kappa, \mathbf r)
=
e^{\mathrm{i}\kappa \mathbf p\cdot \mathbf r}
\).
Multiplying \eqref{12} by \(\phi\) defined as in \eqref{10} and applying Green's identity over \(\Omega\), we obtain
\begin{equation}\label{13}
\kappa^{-2}
\int_{\partial\Omega}
\left(
\psi_B\,\partial_{\mathbf n}\phi
-
\phi\,\partial_{\mathbf n}\psi_B
\right)\,\mathrm ds
=
\int_{\Omega} q^\dagger\,\phi\,\phi^{\mathrm{inc}}\,\mathrm d\mathbf r.
\end{equation}
Since
\[
\int_{\Omega} q^\dagger\,\phi\,\phi^{\mathrm{inc}}\, \mathrm d\mathbf r
=
\int_\Omega q^\dagger(\mathbf r)
e^{i\kappa(\mathbf d+\mathbf p)\cdot \mathbf r}\, \mathrm d\mathbf r,
\]
the right-hand side of \eqref{13} gives Fourier-type data of \(q^\dagger\) at
\[
\boldsymbol{\xi} = \kappa(\mathbf d+\mathbf p).
\]
In the notation of \eqref{8}, we define
\[
\mathscr L_\kappa(q^\dagger)
:=
\int_\Omega q^\dagger(\mathbf r)
e^{\mathrm{i}\boldsymbol{\xi}\cdot \mathbf r}\,\mathrm d\mathbf r.
\]
In the construction, we replace the Born field \(\psi_B\) in \eqref{13} with the measured scattered field \(\psi\), and define
\[
\mathscr J_\kappa(D_\kappa)
:=
\kappa^{-2}
\int_{\partial\Omega}
\left(
\psi\,\partial_{\mathbf n}\phi
-
\phi\,\partial_{\mathbf n}\psi
\right)\,\mathrm ds.
\]
Accordingly, \eqref{8} takes the form
\begin{equation}\label{14}
  \kappa^{-2}
\int_{\partial\Omega}
\left(
\psi\,\partial_{\mathbf n}\phi
-
\phi\,\partial_{\mathbf n}\psi
\right)\,\mathrm ds
\approx
\int_\Omega q^\dagger(\mathbf r)
e^{\mathrm{i}\boldsymbol{\xi}\cdot \mathbf r}
\,\mathrm d\mathbf r.  
\end{equation}
Applying \(\mathscr R_\kappa\) as in \eqref{9} then yields the corresponding domain-defined feature \(\tilde q_\kappa\).

\subsection{Multi-Level Training Algorithm}\label{sc3.2}

The proposed framework is trained level by level from low to high frequencies.  At each frequency level, the subnetwork takes a two-channel input consisting of
the previous-level output and the current-frequency feature.
Specifically, for the \(i\)-th training sample, the input to the \(n\)-th subnetwork is formed as
\[
\mathbf x_{n,[i]}
=
\bigl(a_{n-1,[i]},\widetilde a_{n,[i]}\bigr),
\qquad i=1,\ldots,M,
\]
where $a_{n-1,[i]}$ denotes the output of the previous subnetwork for the $i$-th sample, with $a_{0,[i]}$ set to the initial guess, and \(\widetilde a_{n,[i]}\) denotes the domain-defined feature constructed from
the current-frequency data according to \eqref{8} and \eqref{9}.
The parameters of the $n$-th subnetwork are obtained by solving
\[
\Theta_n
=
\arg\min_{\Theta}
\frac{1}{M}
\sum_{i=1}^{M}
\mathcal L\left(
\mathcal N_{\Theta}(\mathbf x_{n,[i]}), a_{[i]}^\dagger
\right),
\]
where \(\mathcal L\) denotes the loss function between the network output and the
reference target. Since all previously trained subnetworks are fixed, the optimization at level \(n\) only involves the parameters of the current network. After training, the $n$-th subnetwork produces the updated approximation
\[
a_{n,[i]} = \mathcal{N}_{\Theta_n}(\mathbf{x}_{n,[i]}),
\]
which is then passed to the next frequency level.

We summarize the complete computational procedure in Algorithm~\ref{algo}. As the frequency increases, the higher-order Fourier components of the imaging target are progressively recovered, which improves the accuracy of reconstruction. In addition, by decomposing the global learning problem into a sequence of simpler local tasks, the computational framework  reduces the influence of undesirable local minima and improves the stability of the overall iterative reconstruction process.

\begin{algorithm}[H]
\caption{Multi-level training with multi-frequency measurements at
\(\{\kappa_n\}_{n=1}^N\)}
\label{algo}
\begin{algorithmic}[1]
\Require Training samples 
\(\{(a^\dagger_{[i]},\{\widetilde a_{n,[i]}\}_{n=1}^N)\}_{i=1}^M\), 
initial guesses \(\{a_{0,[i]}\}_{i=1}^M\)
\For{\(n=1,\ldots,N\)}
    \For{\(i=1,\ldots,M\)}
        \State Form the input
        \[
        \mathbf{x}_{n,[i]} = \bigl(a_{n-1,[i]},\,\tilde{a}_{n,[i]}\bigr)
        \]
    \EndFor
    \State Train the \(n\)-th network by solving
    \[
    \Theta_n
    =
    \arg\min_{\Theta}
    \frac{1}{M}\sum_{i=1}^M
    \mathcal L\Bigl(
     \mathcal{N}_{\Theta}(\mathbf{x}_{n,[i]}),\,
        a^\dagger_{[i]}
    \Bigr)
    \]
    \For{\(i=1,\ldots,M\)}
        \State Update the approximation
        \[
       a_{n,[i]} = \mathcal{N}_{\Theta_n}(\mathbf{x}_{n,[i]})
        \]
    \EndFor
\EndFor
\State \textbf{Output:} \(\{\mathcal{N}_{\Theta_n}\}_{n=1}^N\)
\end{algorithmic}
\end{algorithm}

\section{Numerical Experiments}\label{sc4}

In this section, we present several numerical experiments for the ISP and the IMP to demonstrate the performance of the proposed multi-level learning framework.

We parameterize the wavenumber by \(\kappa=2\pi\nu\), where \(\nu\) denotes the frequency. In the numerical experiments, we take \(0<\nu_{\min}<\nu_{\max}\) and \(N\geq 2\). The frequencies in \([\nu_{\min},\nu_{\max}]\) are uniformly sampled as
\[
  \nu_n=\nu_{\min}+(n-1)\delta\nu,
  \qquad
  \delta\nu=\frac{\nu_{\max}-\nu_{\min}}{N-1},
  \qquad n=1,\ldots,N.
\]
The corresponding wavenumbers are
\[
  \kappa_n=2\pi\nu_n,
  \qquad n=1,\ldots,N .
\]
In all examples, we let
\[
\Omega=\{\mathbf r\in\mathbb R^2:|\mathbf r|<1.5\},
\]
and assume that the unknown source or medium is compactly supported in \(\Omega\). 
To generate the training data, we solve the forward problems using an
\(H^1\)-conforming complex-valued finite element method implemented in NGSolve. To approximate the outgoing radiation condition, the open domain is truncated
to a finite computational domain using the perfectly matched layer (PML)
technique.

For each \(\kappa\in\{\kappa_n\}_{n=1}^N\), the measurement data
\(D_\kappa\) are given by the Cauchy boundary data of the radiated or scattered
field collected on \(\partial\Omega\), as defined in
\eqref{eq:cauchy_data}.

The data are contaminated by noise. Specifically, for a noise level \(\delta> 0\),  the
noisy Cauchy data \(D_\kappa^\delta=\left(
\psi^\delta|_{\partial\Omega},
\partial^\delta_{\mathbf n}\psi|_{\partial\Omega}
\right)\) are generated by 
\[
\psi^\delta|_{\partial\Omega}
=
\psi|_{\partial\Omega}+\epsilon_\kappa^{(1),\delta},
\qquad
\partial_{\mathbf n}\psi^\delta|_{\partial\Omega}
=
\partial_{\mathbf n}\psi|_{\partial\Omega}+\epsilon_\kappa^{(2),\delta} .
\]
Here, \(\epsilon_\kappa^{(1),\delta}\) and \(\epsilon_\kappa^{(2),\delta}\) are independent complex-valued uniform random perturbations added to the discretized Dirichlet and Neumann data, respectively. They are scaled separately so that
\[
\frac{\|\epsilon_\kappa^{(1),\delta}\|_2}
{\|\psi|_{\partial\Omega}\|_2}
=
\delta,
\qquad
\frac{\|\epsilon_\kappa^{(2),\delta}\|_2}
{\|\partial_{\mathbf n}\psi|_{\partial\Omega}\|_2}
=
\delta,
\]
where \(\|\cdot\|_2\) denotes the discrete \(\ell^2\)-norm.

\begin{figure}[H]
  \centering
\includegraphics[width=0.80\linewidth]{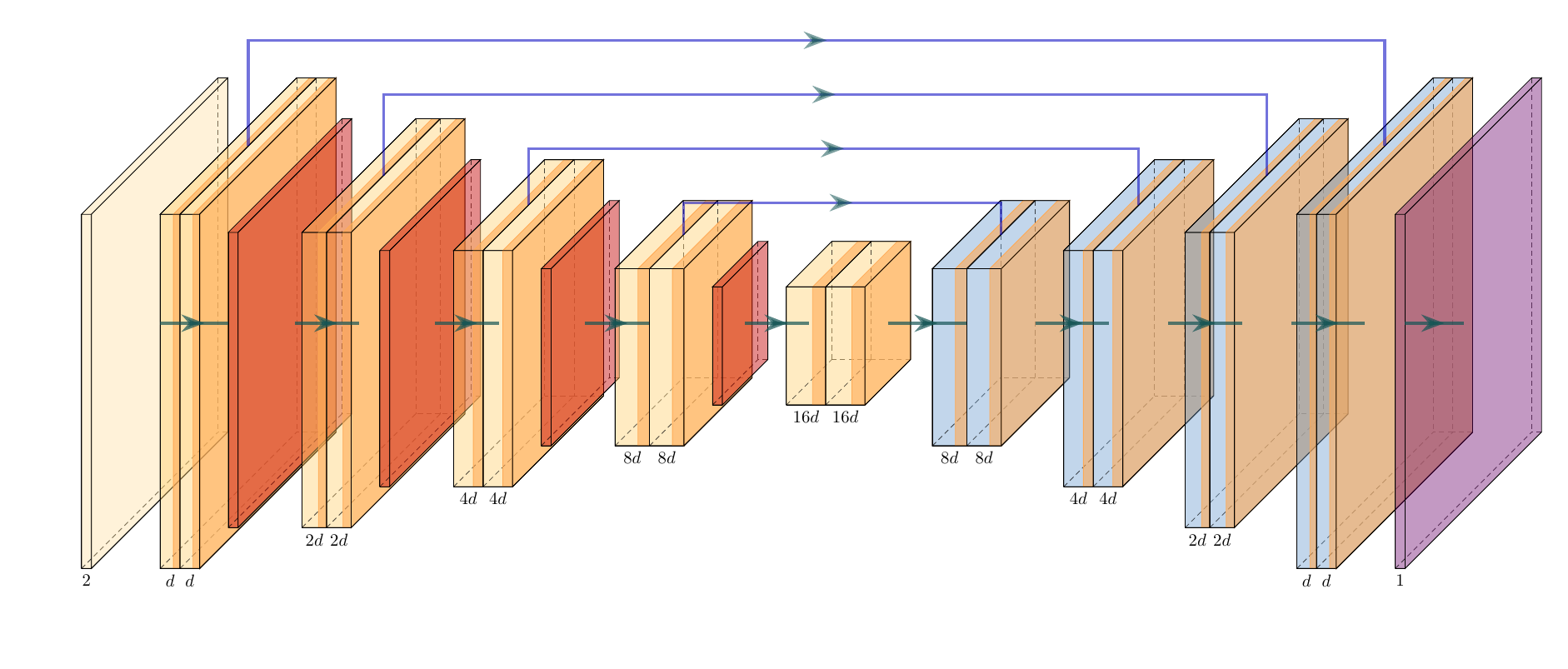}
\caption{U-Net subnetwork with an encoder-decoder architecture and skip connections. The network takes a two-channel input and produces a single-channel output. The encoder has four downsampling stages with \(d\), \(2d\), \(4d\), and \(8d\) feature channels, followed by a \(16d\)-channel bottleneck, where \(d\) is a positive integer. Each encoder stage contains two \(3\times3\) convolutional layers with ReLU activations, followed by \(2\times2\) average pooling. The decoder is symmetric and uses nearest-neighbor upsampling and encoder--decoder skip connections. Each decoding stage applies two \(3\times3\) convolutional layers with ReLU activations after feature concatenation. A final \(1\times1\) convolution produces the single-channel output.}
  \label{fig:u-net}
\end{figure}

In Algorithm~\ref{algo}, the initial guess \(a_{0,[i]}\) for each sample is taken to be the zero function. For each \(n\), the input to the subnetwork, as defined in \eqref{7}, consists of matrix-valued quantities defined on the same grid as the ground-truth target. Therefore, NN techniques for image processing can be
directly employed for training.
In particular, the subnetwork is implemented as a convolutional U-Net, as illustrated in Figure~\ref{fig:u-net}. The U-Net takes a two-channel input of spatial resolution \(128\times128\) and has a single-channel output. When applied to the previous-level reconstruction \(a_{n-1}\) and the current-frequency feature \(\widetilde a_n\), the trained subnetwork \(\mathcal N_{\Theta_n}\) yields the updated reconstruction \(a_n\).
 The U-Net has an encoder-decoder architecture with skip connections.
The encoder consists of four downsampling blocks. Each downsampling block consists of two \(3\times3\) convolutional layers, each followed by a ReLU activation, before a \(2\times2\) average-pooling layer is applied. The bottleneck consists of two \(3\times3\) convolutional layers with ReLU activations. The decoder consists of four upsampling blocks. At each decoding level, the feature map is first upsampled by a factor of \(2\) using nearest-neighbor interpolation and then concatenated, along the channel dimension, with the pre-pooling feature map from the corresponding encoder block through a skip connection. The concatenated feature map is subsequently processed by two \(3\times3\) convolutional layers with ReLU activations. Finally, a \(1\times1\) convolution reduces the resulting feature map to a single channel.  The source code is available at
\href{https://github.com/liuyi3616/multi-frequency-inverse-problems}
{\texttt{GitHub}}.

For a reconstruction \(a_n^\delta\) obtained at the \(n\)-th frequency level
from the data with noise level \(\delta\), we define the relative error by
\[
e_n^\delta
=
\frac{\|a_n^\delta-a^\dagger\|_F}{\|a^\dagger\|_F}.
\]
In each reconstruction, the relative error is given in the upper-right corner of the image.

\subsection{Inverse Source Problem}
We set \(\nu_{\min}=0.1\), \(\nu_{\max}=7\) and the frequency step size \(\delta\nu=0.1\). 
The number of frequency levels is \(N=70\).
In the following numerical experiments, we consider data with two noise levels: $\delta=0$ and $\delta=0.05$.

\medskip
\noindent\textbf{Data generation and training setup.} 
We generate \(6000\) synthetic source functions. Each source \(f^{\dagger}\) consists of \(1\)--\(5\) non-overlapping inclusions, including cosine disks, rectangles, and triangles, with support compactly contained in \(\Omega\). The rectangular and triangular inclusions are slightly smoothed, and a smooth radial cutoff is applied so that the source vanishes near \(\partial\Omega\).

For each \(n=1,\ldots,N\), the Cauchy data are defined as
\[
    D_{\kappa_n}
    =
    \left(
    \psi_n|_{\partial\Omega},
    \partial_{\mathbf n}\psi_n|_{\partial\Omega}
    \right),
\]
where \(\psi_n\) is the radiated field satisfying
\[
\Delta \psi_n+\kappa_n^2\psi_n=-f^\dagger \qquad \text{in } \mathbb R^2. 
\]
In the numerical implementation, \(D_{\kappa_n}\) is approximated by solving the corresponding PML-truncated forward problem.

To construct the domain feature, according to  \eqref{10} and \eqref{11}, we first choose \(N_\theta=30\) uniformly sampled directions on the unit circle:
\[
\theta_j=\frac{2\pi(j-1)}{N_\theta},
\qquad
\mathbf d_j=(\cos\theta_j,\sin\theta_j),
\qquad j=1,\ldots,N_\theta .
\]
For the \(j\)-th direction at the \(n\)-th frequency level, we define
\[
\phi_{n,j}(\mathbf r)
=
e^{\mathrm i\kappa_n \mathbf d_j\cdot \mathbf r}.
\]
Using the Cauchy data \(D_{\kappa_n}\), we compute the coefficient
\begin{equation}\label{bound_int}
\widehat f_{n}(\theta_j)
=
\int_{\partial\Omega}
\left(
\psi_n\,\partial_{\mathbf n}\phi_{n,j}
-
\phi_{n,j}\,\partial_{\mathbf n}\psi_n
\right)\,\mathrm ds .
\end{equation}
Then, the domain feature  \(\widetilde f_n\) at the \(n\)-th level is given by
\begin{equation}\label{16}
\tilde f_{n}(\mathbf r)
=
\sum_{j=1}^{N_\theta}
\widehat f_{n}(\theta_j)
e^{-\mathrm i\kappa_n\mathbf d_j\cdot \mathbf r}.   
\end{equation}

For the training procedure in Algorithm \ref{algo}, each ground-truth source function  \(f_{[i]}^\dagger\), and its corresponding domain-defined features \(\{\widetilde f_{n,[i]}\}_{n=1}^N\) are discretized on a \(128\times128\) grid and stored as images. The resulting dataset consists of \(6000\) synthetic samples, each containing one ground-truth source and its corresponding multi-frequency features. The dataset is partitioned into training, validation, and test sets of sizes \(3840\), \(960\), and \(1200\), respectively.
We use a U-Net architecture with a base number of feature channels $d=8$ for each subnetwork
and train the subnetworks using \textsc{Adam}
with an initial learning rate of $10^{-4}$, a batch size of $80$, and $400$ epochs.
An exponential learning-rate scheduler with decay factor $\chi=0.99$ is applied.

\medskip
\noindent\textbf{Numerical results.} The numerical results below are organized in three parts. First, we demonstrate
the reconstruction quality of the proposed computational method. Second, we
compare the proposed method with the Landweber iterative method. Third, we test the trained model on samples outside the training distribution.

\begin{figure}[H]
  \centering
  \includegraphics[width=0.94\linewidth]{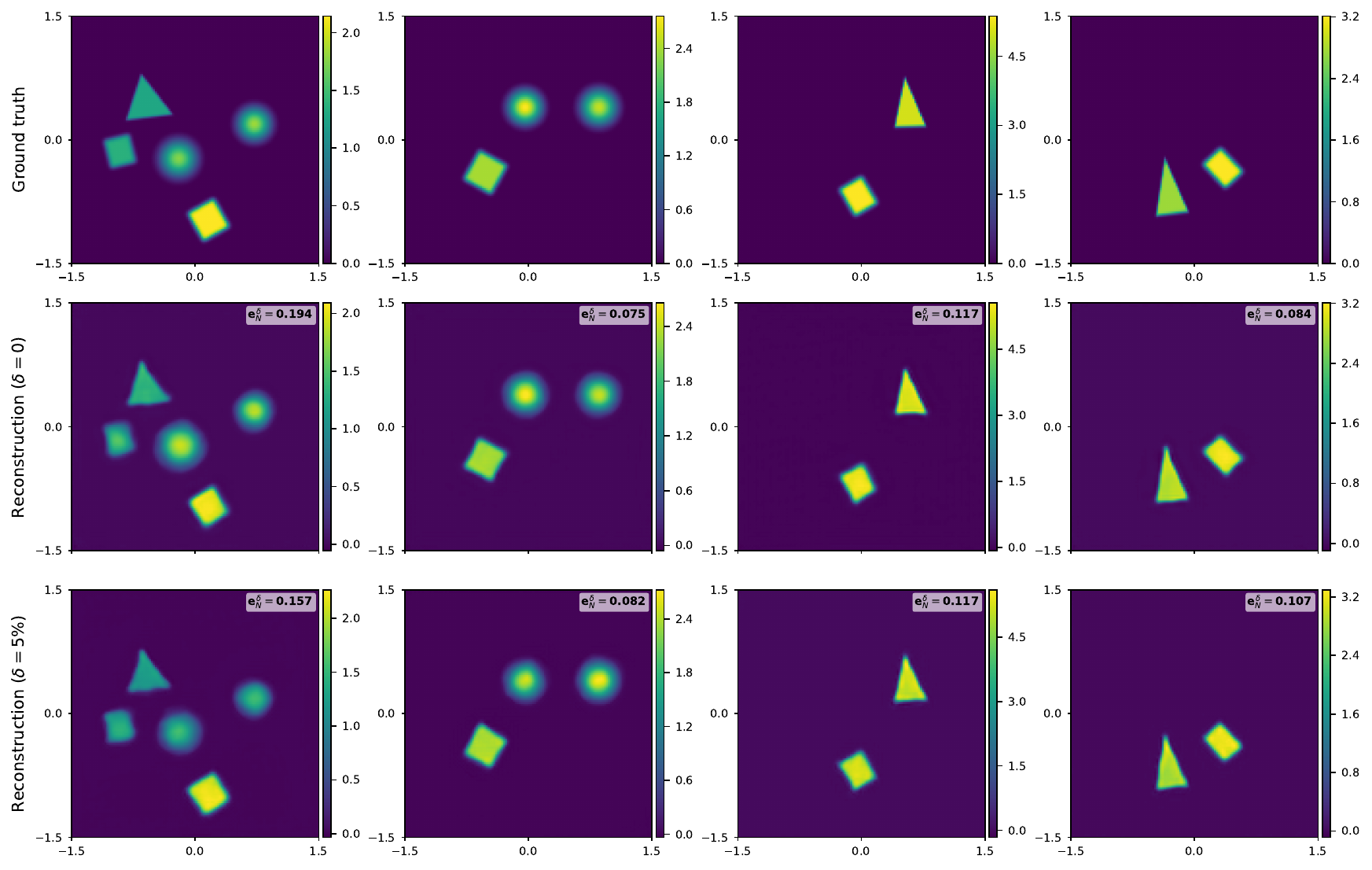}
\caption{Final reconstructions for the ISP with \(\nu_N=\nu_{\max}\).
Top row: ground truth \(f^\dagger\).
Middle row: reconstructions \(f_N^\delta\) with \(\delta=0\).
Bottom row: reconstructions \(f_N^\delta\) with \(\delta=0.05\).}
  \label{fig1:f7-compare}
\end{figure}

\smallskip
\noindent\textbf{(i) Reconstruction results.}
Figure~\ref{fig1:f7-compare} presents four representative test samples and compares
the ground truth \(f^\dagger\) with the final reconstructions \(f_N^\delta\)
at \(\nu_N=\nu_{\max}\) obtained from models trained at different noise levels.
In each column, the top row shows the ground-truth source function, the middle
row shows the reconstruction from the model trained with \(\delta=0\), and the
bottom row shows the reconstruction from the model trained with \(\delta=0.05\).
The reconstructions accurately recover the locations and shapes of the radiating
sources. In addition, the relative errors for \(\delta=0.05\) remain comparable
to those for \(\delta=0\), indicating the stability of the reconstruction with
respect to noise.

\begin{figure}[H]
  \centering  \includegraphics[width=0.94\linewidth]{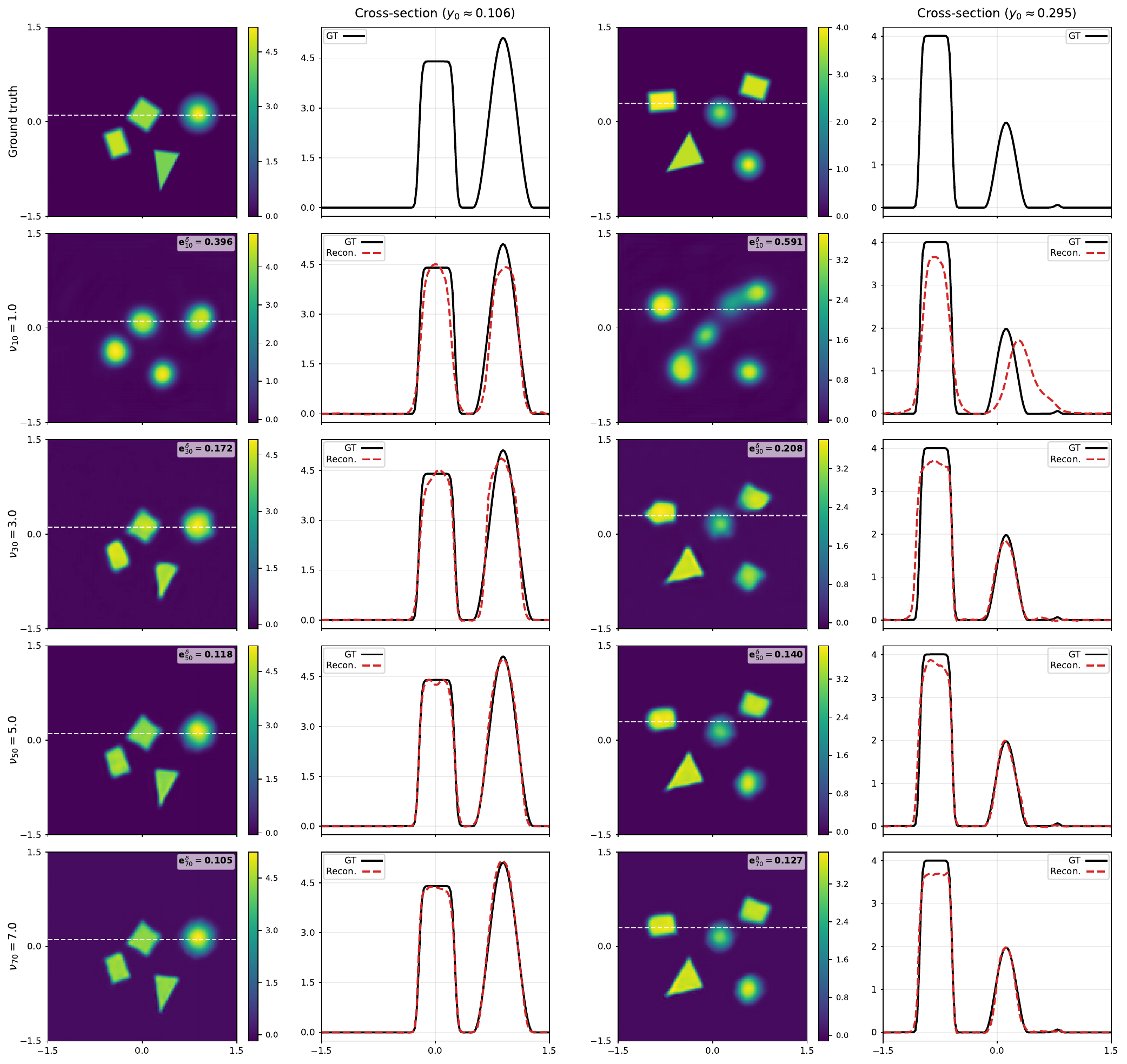}
\caption{Evolution of reconstructions and cross-sectional plots along \(y=y_0\)
for two test samples with \(\delta=0.05\). From top to bottom:
\(\nu_n=1,3,5,7\), corresponding to \(n=10,30,50,70\), respectively.}
  \label{fig2:cs-y}
\end{figure}
Next, we illustrate the evolution of the reconstruction with respect to the frequency by plotting the cross-section of the source function along a fixed horizontal line \(y=y_0\). As shown in Figure~\ref{fig2:cs-y}, the low-frequency reconstructions capture the main support of the source, while the higher-frequency reconstructions provide more accurate shape and amplitude information. It is seen that the relative error decreases as the frequency increases. This improvement is clearly reflected in the cross-section plots: the reconstructed traces along \(y=y_0\) gradually approach the ground truth, especially near the peaks and sharp transitions. This indicates the convergence of the proposed multi-level framework.

\smallskip
\noindent\textbf{(ii) Comparison with the Landweber method.}
We compare the reconstruction results of the proposed multi-level learning
framework with those of the Landweber iteration
(see Figures~\ref{fig:mean_err} and \ref{fig:compare}).  Using the noisy data with \(\delta=0.05\), the Landweber iteration is applied at
level \(n\) to solve the least-squares problem
\[
    \min_{v} \frac{1}{2}\sum_{m=1}^{n}
    \|A_m v-b_m^\delta\|_2^2 .
\]
Here
\(
    b_m^\delta
    =
    \bigl(
    \widehat{f}_{m}^{\delta}(\theta_1),
    \ldots,
    \widehat{f}_{m}^{\delta}(\theta_{N_\theta})
    \bigr)^\top
\)
is the vector of Fourier-type coefficients computed from the noisy boundary
data through the boundary integral relation \eqref{bound_int}, \(v\) is the
coefficient vector of the finite-dimensional approximation of the source
function, and \(A_m\) is the corresponding finite element discretization matrix at \(\kappa_m\).  Figure~\ref{fig:mean_err} plots the mean relative error
\(\bar e^\delta(\nu_n)\) as a function of \(\nu_n\), \(n=1,\ldots,N\). The mean relative error decreases for both methods as the frequency increases. However,
over the tested frequency range, the proposed method consistently yields smaller
errors than the Landweber method, indicating improved reconstruction accuracy.
\begin{figure}[H]
  \centering
\includegraphics[width=0.65\linewidth]{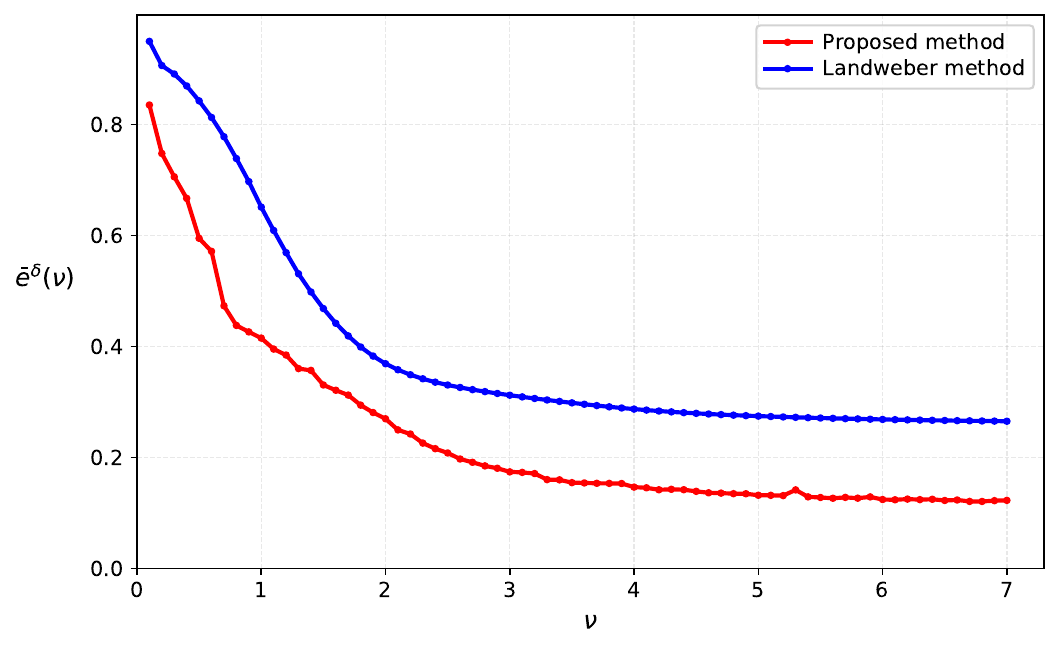}
\caption{Mean relative error \(\bar e^\delta(\nu)\) of the proposed method
and the Landweber method over the test samples as the frequency increases,
where \(\nu\in\{\nu_n\}_{n=1}^N\).}
  \label{fig:mean_err}
\end{figure}

Figure~\ref{fig:compare} shows the reconstructions at the final frequency level \(N\). The reconstruction result $f_{\mathrm{LW},N}^{\delta}$ obtained by the Landweber method recovers the main support of the source but
contains oscillatory artifacts, which are related to the limited angular sampling used in the computation of the boundary integrals in \eqref{bound_int}. In contrast, the proposed method
produces a sharper reconstruction that is closer to the ground truth. This improvement reflects the stability of the proposed multi-level framework
with respect to noise. The results suggest
that the learned model provides an implicit regularization from the training data, thereby reducing oscillatory artifacts.

\begin{figure}[H]
  \centering
\includegraphics[width=0.85\linewidth]{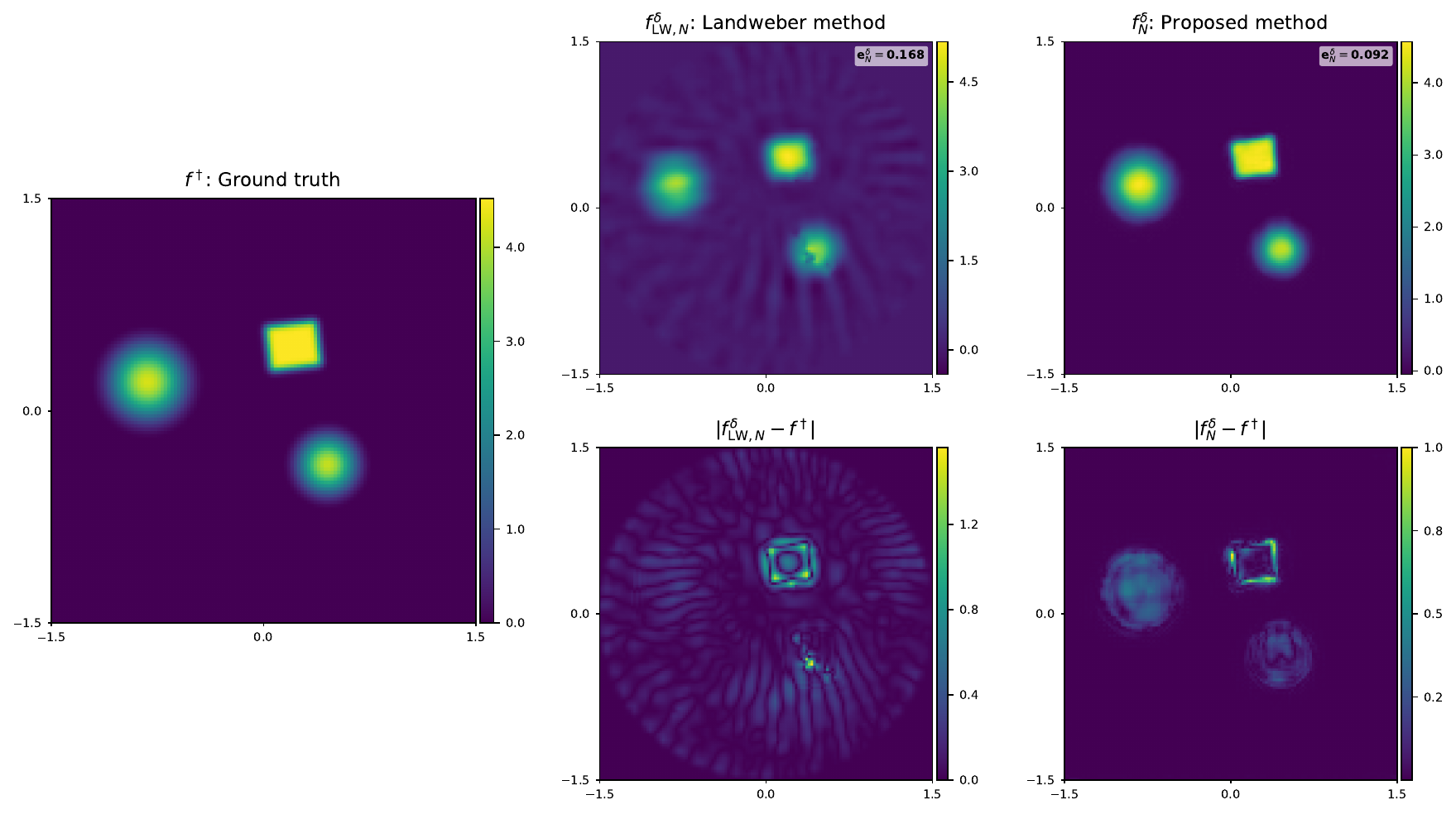}
\caption{Comparison of the Landweber method and the proposed method at the
final frequency level \(N\), with \(\nu_N=\nu_{\max}\) and \(\delta=0.05\).
First column: ground truth \(f^\dagger\). Second column: reconstruction result
\(f_{\mathrm{LW},N}^{\delta}\) obtained by the Landweber iteration and its pointwise absolute error
\(|f_{\mathrm{LW},N}^{\delta}-f^\dagger|\). Third column: reconstruction result
\(f_N^\delta\) obtained by the proposed method and its pointwise absolute error
\(|f_N^\delta-f^\dagger|\).}
  \label{fig:compare}
\end{figure}

\begin{figure}[H]
  \centering
\includegraphics[width=0.94\linewidth]{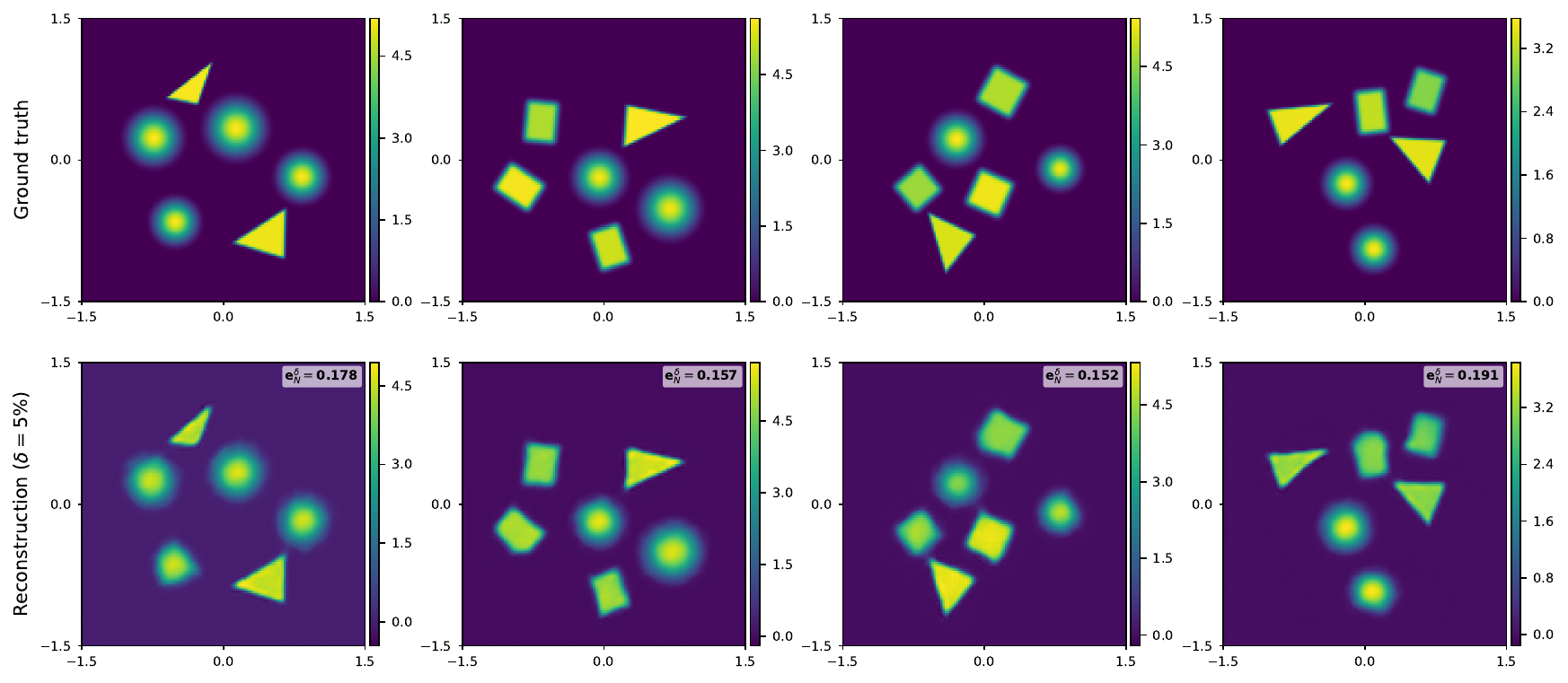}
\caption{Generalization test on samples with \(6\) inclusions. Top row:
ground-truth sources. Bottom row: reconstructions obtained at the final
frequency level \(N\), with \(\nu_N=\nu_{\max}\) and 
\(\delta=0.05\).}
  \label{fig:gen}
\end{figure}

\smallskip
\noindent\textbf{(iii) Generalization test.}
We test the trained model on samples that contain more source inclusions than
those in the training data. More precisely, the network is trained on samples with
\(1\)--\(5\) inclusions and tested on samples with \(6\) inclusions.
Figure~\ref{fig:gen} shows that the proposed method still recovers the main
support, locations, and geometric features of the sources. These results
indicate that the proposed framework remains effective for source configurations
beyond the training range.

\subsection{Inverse Medium Problem}
In this subsection, we consider the case \(\delta=0\). We set \(\nu_{\min}=0.4\), \(\nu_{\max}=6\), and the frequency step size \(\delta\nu=0.4\) so that \(N=15\).

\medskip
\noindent\textbf{Data generation and training setup.}
We generate \(5000\) synthetic medium functions for this numerical experiment. Each medium \(q^{\dagger}\) consists of 8 cosine bumps in the form of
$f(x; c, h, r) = \frac{h}{2}\left(\cos( \frac{\pi|x - c|}{r}) + 1 \right) \chi_{|x - c| < r} $. We use the radius $r\in [0.2, 0.4]$, and the intensity $h\in [0.5, 1.5]$, and the centers are uniformly distributed in $\Omega$.

For each $n=1,\ldots,N$, we use $N_\beta=32$ incident directions defined by
\[
\beta_k=\frac{2\pi(k-1)}{N_\beta},
\qquad
\mathbf p_k=(\cos\beta_k,\sin\beta_k),
\qquad k=1,\ldots,N_\beta.
\]
For the $k$-th incident direction, the incident field is given by
\[
\phi^{\mathrm{inc}}_{n,k}(\mathbf r)
=
e^{\mathrm i\kappa_n \mathbf p_k\cdot \mathbf r}.
\]
We then collect the Cauchy data of the corresponding scattered field
\(\psi_{n,k}\):
\[
D_{\kappa_n,k}
=
\left(
\psi_{n,k}|_{\partial\Omega},
\partial_{\mathbf n}\psi_{n,k}|_{\partial\Omega}
\right).
\]
Here \(\psi_{n,k}\) is obtained by numerically solving
\[
\Delta \psi_{n,k}
+
\kappa_n^2(1+q^\dagger)\psi_{n,k}
=
-\kappa_n^2 q^\dagger \phi^{\mathrm{inc}}_{n,k}
\qquad \text{in } \mathbb R^2.
\]

To construct the domain feature, we introduce \(N_\theta=8\) auxiliary
directions:
\[
\theta_j=\frac{2\pi(j-1)}{N_\theta},
\qquad
\mathbf d_j=(\cos\theta_j,\sin\theta_j),
\qquad j=1,\ldots,N_\theta ,
\]
and define
\[
\phi_{n,j}(\mathbf r)
=
e^{\mathrm i\kappa_n \mathbf d_j\cdot \mathbf r}.
\]
Using \eqref{14}, the boundary data provide the following Fourier-type
measurements of \(q^\dagger\) under the Born approximation:
\begin{equation}\label{17}
    \kappa_n^{-2}
\int_{\partial\Omega}
\left(
\psi_{n,k}\,\partial_{\mathbf n}\phi_{n,j}
-
\phi_{n,j}\,\partial_{\mathbf n}\psi_{n,k}
\right)\,\mathrm ds
\approx
\int_\Omega
q^\dagger(\mathbf r)
e^{\mathrm i\kappa_n(\mathbf d_j+\mathbf p_k)\cdot \mathbf r}
\,\mathrm d\mathbf r ,
\end{equation}
for \(k=1,\ldots,N_\beta\) and \(j=1,\ldots,N_\theta\). The domain feature \(\tilde q_n\) is then obtained by solving the resulting least-squares system.

For the training procedure in Algorithm \ref{algo}, each ground-truth medium  \(q_{[i]}^\dagger\) and its corresponding domain-defined
features \(\{\widetilde q_{n,[i]}\}_{n=1}^N\) are discretized on a \(128\times128\)
grid and stored as images. The resulting dataset consists of \(5000\) synthetic
samples, each containing one ground-truth medium and its corresponding
multi-frequency features. The dataset is partitioned into training, validation,
and test sets of sizes \(3200\), \(800\), and \(1000\), respectively. In addition, we adopt a U-Net architecture with a base number of feature channels $d=4$ and train it
using \textsc{Adam} with an initial learning rate of $10^{-4}$ and an ExponentialLR scheduler with ($\varrho = 0.99$). Training is performed with a batch size of \(50\) for \(300\) epochs.

\medskip
\noindent\textbf{Numerical results.}
At the final frequency level \(N\), where \(\nu_N=\nu_{\max}\), the overall training MSE is $1.38\times 10^{-2}$ and the test MSE is $1.58\times 10^{-2}$. A few test samples are randomly selected from the test dataset, and their reconstructed images are illustrated in Figure~\ref{fig:imp-1}. We observe that the reconstructed bumps are more accurate when they are disjoint. When the bumps overlap, the reconstruction becomes less accurate.

\begin{figure}[!htbp]
    \centering    \includegraphics[width=0.23\linewidth]{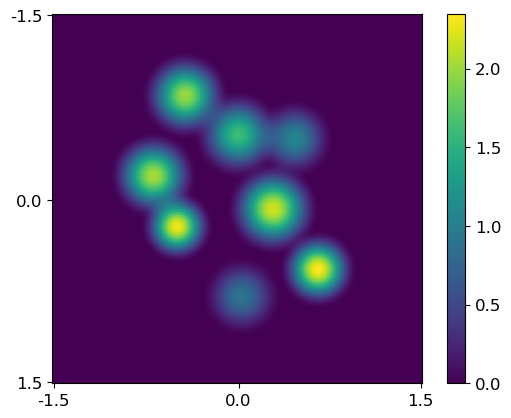}
    \includegraphics[width=0.23\linewidth]{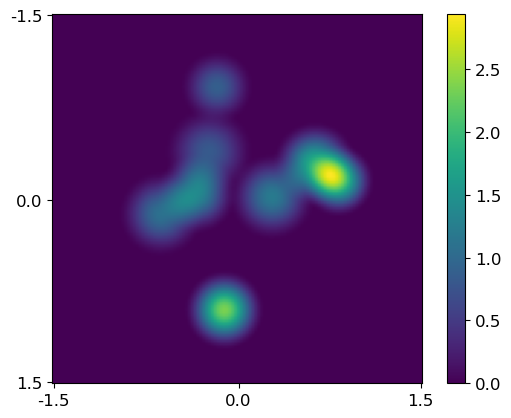}
    \includegraphics[width=0.23\linewidth]{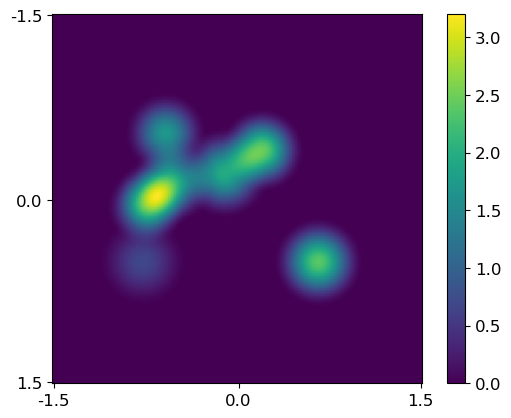}
    \includegraphics[width=0.23\linewidth]{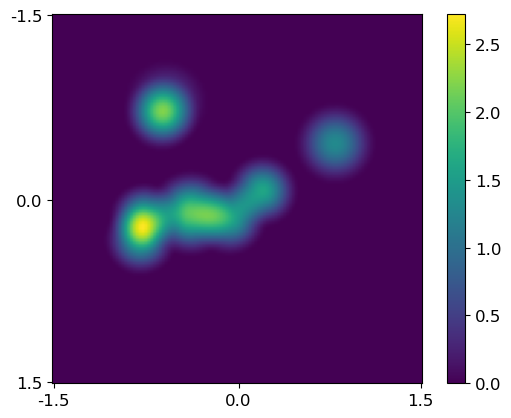}
    \\
    \includegraphics[width=0.23\linewidth]{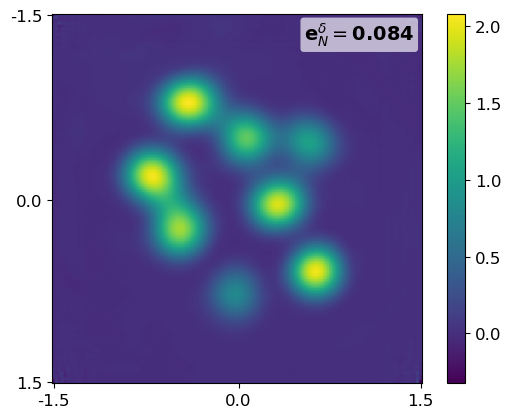}
    \includegraphics[width=0.23\linewidth]{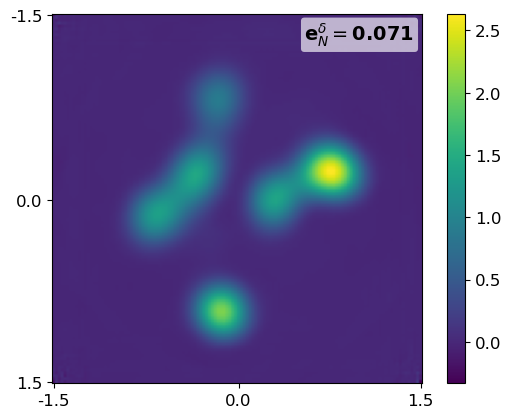}
    \includegraphics[width=0.23\linewidth]{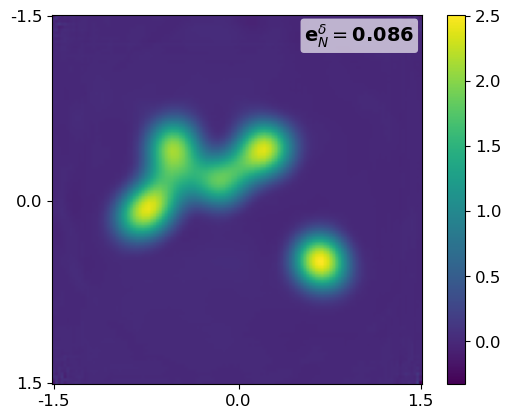}
    \includegraphics[width=0.23\linewidth]{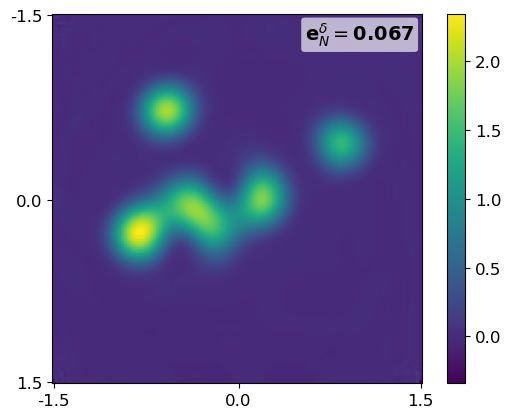}
    \caption{Final reconstructions for the IMP with \(\nu_N=\nu_{\max}\). Top row: ground truth $q^{\dagger}$. Bottom row: reconstructed medium $q_N$ at the final level \(N\).}
    \label{fig:imp-1}
\end{figure}
Figure~\ref{fig: imp-evo-low-to-high}  shows that the reconstruction for the
IMP evolves in a manner similar to that observed for the ISP. The improvements of the reconstruction are less noticeable for higher frequencies.  First, the NN uses the Born-approximation reconstruction 
\(\tilde{q}_n\), computed based on \eqref{17}, as an input at each frequency level, 
instead of following the classical recursive method~\cite{BaoLiLinTriki2015}, 
where the linearization computation has to be performed at each level. For relatively large \(\kappa_n\), the Born-approximation reconstruction \(\tilde q_n\)
already carries spectral information up to a bandwidth of \(2\kappa_n\). 
Moreover, the coupling between different frequency levels becomes stronger, 
making it more difficult for the network to extract additional information during training. Second, it is known~\cite{nagayasu2013increasing, isakov2010increasing} that the linearized reconstruction of the medium $q^{\dagger}$ can be
less stable at lower frequencies, namely, when $\kappa < \mathcal{O}(|\log \epsilon|)$, where $\epsilon$ is the error in the boundary measurements. Thus, the computation of \(\tilde{q}_n\) was performed using Tikhonov regularization, which can introduce a bias toward the smoother components.

\begin{figure}[H]
    \centering
    \includegraphics[width=0.23\linewidth]{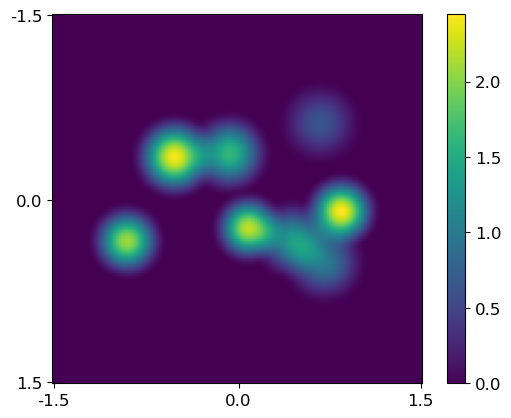}
    \includegraphics[width=0.23\linewidth]{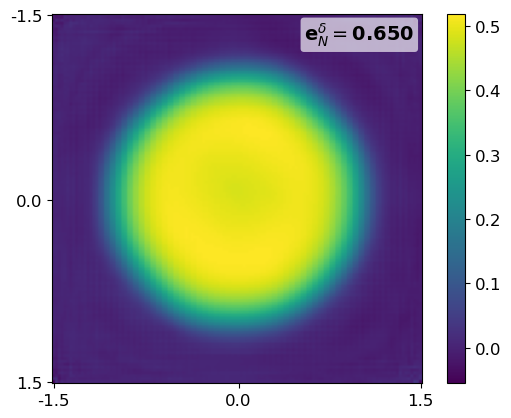}
    \includegraphics[width=0.23\linewidth]{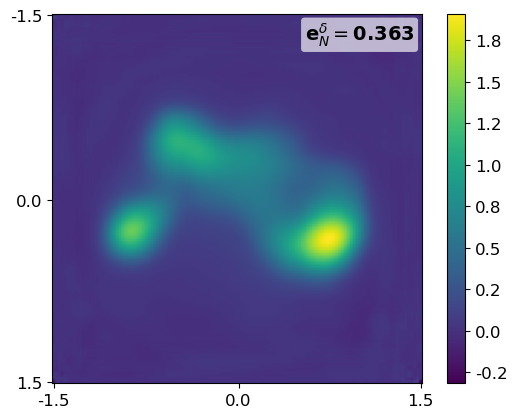}
    \includegraphics[width=0.23\linewidth]{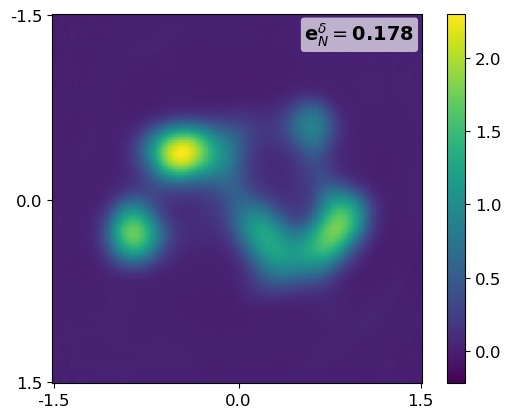}\\
    \includegraphics[width=0.23\linewidth]{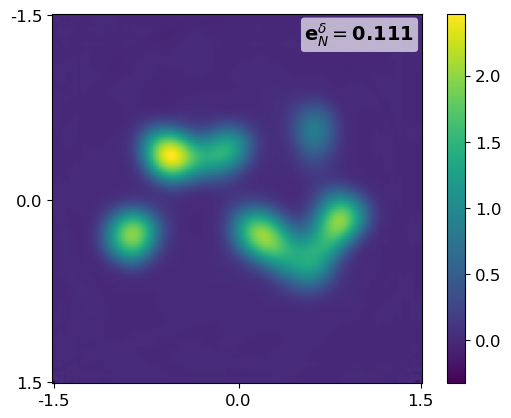}
    \includegraphics[width=0.23\linewidth]{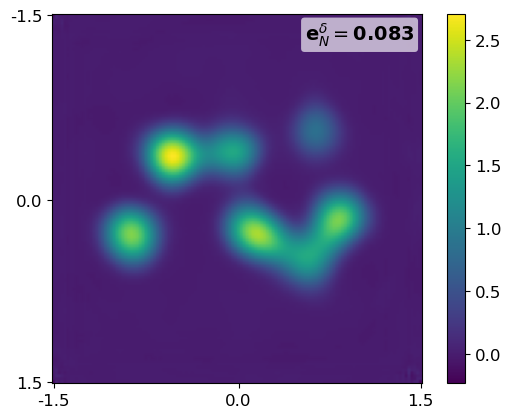}
    \includegraphics[width=0.23\linewidth]{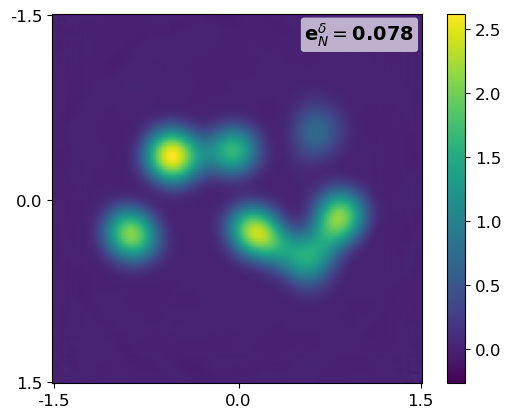}
    \includegraphics[width=0.23\linewidth]{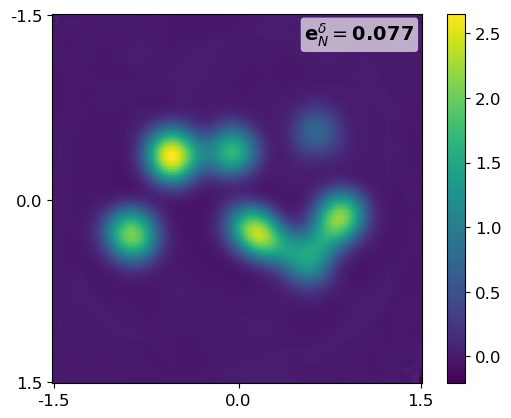}
    \caption{The first image in the top row is the ground truth \(q^{\dagger}\). The subsequent images (left to right) are reconstructions at selected levels in increasing order of \(\nu\in[\nu_{\min},\nu_{\max}]\), shown every \(0.8\) in frequency.}
    \label{fig: imp-evo-low-to-high}
\end{figure}

\section{Theoretical Analysis with Two-Layer Neural Networks}\label{sc5}
In this section, we provide a theoretical interpretation of the proposed
multi-level framework from the perspective of the neural tangent kernel (NTK), motivated by the analyses in \cite{jac2018,wang2025}. The goal is to understand how each subnetwork incorporates the
current-frequency information to update the previous reconstruction, and how these updates propagate across frequency levels. For simplicity, we perform the analysis for the ISP. The main result, stated in Theorem~\ref{th5.1}, shows that the network output at each level recovers the target Fourier components over a frequency set determined by both the current domain feature and the information inherited
from previous frequency levels. This interpretation is consistent with the progressive refinement observed in the numerical results as the frequency level increases.

We next describe the analytical setting for our main result. Assume that the Cauchy data are available at \(\{\kappa_n\}_{n=1}^{N}\), with \(\kappa_n=2\pi \nu_n\). For each \(\nu_n\), we set
\begin{equation}\label{18}
S_n
=
\Big\{
\boldsymbol{\nu}\in\mathbb R^2:\ 
\|\boldsymbol{\nu}\|=\nu_n,\,
\arg(\boldsymbol{\nu})\in
\Big\{\frac{j\pi}{N_\theta}\Big\}_{j=0}^{2N_\theta-1}
\Big\}.
\end{equation}
Here \(\arg(\cdot)\in[0,2\pi)\) denotes the polar angle. 
\(S_n\) consists of the \(2N_\theta\) uniformly sampled frequency vectors
on the circle of radius \(\nu_n\) that are used in the construction of the domain
feature. Throughout this section, for \(h\in L^2(\mathbb R^2;\mathbb R)\), we use the
Fourier transform convention
\[
\hat h(\boldsymbol{\nu})
=
\int_{\mathbb R^2}
h(\mathbf r)e^{\mathrm i2\pi\boldsymbol{\nu}\cdot\mathbf r}
\,\mathrm d\mathbf r.
\]
We use \(|z|\) for the complex modulus of a scalar \(z\), \(u^H\) for the Hermitian transpose of a complex vector \(u\), and \(\|\cdot\|=\|\cdot\|_2\).

\bigskip
\noindent\textbf{Two-layer convolutional NN.} We consider a simplified two-layer convolutional NN, motivated by the convolutional U-Net used in the numerical experiments. 
The simplified model preserves the essential input structure of the proposed framework, namely the dependence on the previous-level reconstruction and the current-frequency feature, while omitting the multi-scale encoder--decoder components. 
For \(n=1,2,\ldots,N\), the network is defined by
\begin{equation}\label{19}
\mathcal{N}_{\Theta}(\mathbf{x}_n)(\mathbf{r})
= \frac{1}{\sqrt{L}}\sum_{l=1}^L w_l \,
\sigma\left(
  \frac{1}{\sqrt{L}}\sum_{j=1}^L
  \big( (W^{(1)}_{lj} \ast \mathcal{N}_{n-1})(\mathbf{r})
  + (W^{(2)}_{lj} \ast \tilde{f}_{\,n})(\mathbf{r}) \big)
\right).
\end{equation}
The parameter set is
\[
\Theta
=
\Big\{
\{w_l\}_{l=1}^L,\,
\{W^{(1)}_{lj}\}_{1\le \ell,j\le L},\,
\{W^{(2)}_{lj}\}_{1\le \ell,j\le L}
\Big\}.
\]
The convolutional kernels
\(\{W^{(1)}_{lj}(\mathbf r,\xi)\}_{1\le \ell,j\le L}\) and
\(\{W^{(2)}_{lj}(\mathbf r,\xi)\}_{1\le \ell,j\le L}\)
are initialized as independent realizations of the random fields
\(\mathcal W^{(1)}(\mathbf r,\xi)\) and
\(\mathcal W^{(2)}(\mathbf r,\xi)\), respectively. The output weights
\(\{w_l\}_{l=1}^L\) are initialized as i.i.d. real standard normal random
variables, and \(\sigma(u)=\max\{0,u\}\) is the ReLU activation function.
Here, \(
\mathbf x_n=(\mathcal N_{n-1},\,\tilde f_{\,n})
\)
denotes the input at the \(n\)-th frequency level. 
For notational clarity in the analysis, we denote the output from the previous frequency level by $\mathcal N_{n-1}$ with \(\mathcal N_0=0\). For \(n\ge 2\),
\[
\mathcal N_{n-1}
=
\mathcal N_{\Theta_{n-1}}(\mathbf x_{n-1}),
\]
where \(\Theta_{n-1}\) denotes the trained parameters from level \(n-1\). Moreover, \(\tilde f_{\,n}\) is
the domain feature associated with the \(n\)-th frequency level, defined by
\begin{equation}\label{20}
\tilde{f}_{\,n}(\mathbf{r})
=
\sum_{\boldsymbol{\nu}\in S_n}
\hat{f}(\boldsymbol{\nu})
e^{-\mathrm{i}2\pi \boldsymbol{\nu}\cdot \mathbf{r}},
\end{equation}
where
\begin{equation}\label{21}
\phi_{\boldsymbol{\nu}}(\mathbf r)
=
e^{\mathrm{i}2\pi\boldsymbol{\nu}\cdot\mathbf r},  \quad \hat{f}(\boldsymbol{\nu})
=
\int_{\Omega}
f^\dagger(\mathbf{r})\,
\phi_{\boldsymbol{\nu}}(\mathbf{r})
\,\mathrm{d}\mathbf{r},
\quad f^\dagger\in L^2(\mathbb R^2;\mathbb R).
\end{equation}
From \eqref{11} and \eqref{bound_int},  \(\hat f(\boldsymbol{\nu})\)
can be computed from the boundary integral:
\begin{equation}
  \hat{f}(\boldsymbol{\nu})=
-\int_{\partial\Omega}
\left(
\phi_{\boldsymbol{\nu}}\,\partial_{\mathbf n}\psi
-
\psi\,\partial_{\mathbf n}\phi_{\boldsymbol{\nu}}
\right)
\,\mathrm{d}s.
\end{equation}

\bigskip
\noindent\textbf{Initialization random fields.} To track inherited and newly introduced Fourier information, we specify the random fields 
$\mathcal W^{(1)}(\mathbf r,\xi)$ and $\mathcal W^{(2)}(\mathbf r,\xi)$ through band-limited frequency sets. 
Let 
\begin{equation}\label{23}
 \mathcal{S}=\bigcup_{n=1}^N S_n,
\end{equation}
and
\[
\mathcal{V}
=
\Big\{
\boldsymbol{\nu}\in\mathbb R^2:\ 
\eta_{\min}\le\|\boldsymbol{\nu}\|<\eta_{\max}
\Big\},
\]
where \(0<\eta_{\min}<\eta_{\max}\) are chosen such that
\(
\{\nu_n\}_{n=1}^{N}\subset[\eta_{\min},\eta_{\max}].
\)
Then \(\mathcal S\subset\mathcal V\). We further partition \(\mathcal V\) into
a centrally symmetric collection of pairwise disjoint sectors. The radial grid \(\{\eta_n\}_{n=1}^{N+1}\) satisfies
\[
\eta_1=\eta_{\min},
\quad
\eta_{N+1}=\eta_{\max},
\quad
\eta_n\le\nu_n<\eta_{n+1},
\qquad n=1,\ldots,N.
\]
Define
\[
\mathcal{I}
=
\{(k,m)\in\mathbb Z^2:\ 1\le |k|\le N,\ 1\le m\le N_\theta\},
\]
together with
\[
\mathcal{I}^+
=
\{(k,m)\in\mathcal{I}:\ k\ge 1\},
\qquad
\mathcal{I}^-
=
\{(k,m)\in\mathcal{I}:\ k\le -1\}.
\]
For \((k,m)\in\mathcal I^+\), we set
\[
V_{k,m}
=
\Big\{
\boldsymbol{\nu}\in \mathcal{V}:\ 
\eta_k\le\|\boldsymbol{\nu}\|<\eta_{k+1},\
\frac{(m-1)\pi}{N_\theta}
\le
\arg(\boldsymbol{\nu})
<
\frac{m\pi}{N_\theta}
\Big\}.
\]
For the corresponding \((-k,m)\in\mathcal I^{-}\), define the centrally
symmetric sector by
\begin{equation}\label{24}
V_{-k,m}
=
-V_{k,m}
=
\{-\boldsymbol{\nu}:\boldsymbol{\nu}\in V_{k,m}\}.
\end{equation}
Thus,
\[
\mathcal{V}
=
\bigcup_{(k,m)\in\mathcal I} V_{k,m}.
\]
The band-limited random fields are then defined by
\begin{equation}\label{25}
\begin{aligned}
\mathcal W^{(1)}(\mathbf{r},\xi)
&=
C
\sum_{\boldsymbol{\nu}\in\mathcal{S}}
Z^{(1)}_{\boldsymbol{\nu}}(\xi)
e^{-\mathrm{i}2\pi\boldsymbol{\nu}\cdot\mathbf{r}},
\\
\mathcal W^{(2)}(\mathbf{r},\xi)
&=
C
\sum_{(k,m)\in\mathcal{I}}
Z^{(2)}_{k,m}(\xi)
\int_{V_{k,m}}
e^{-\mathrm{i}2\pi\boldsymbol{\nu}\cdot\mathbf{r}}
\,\mathrm{d}\boldsymbol{\nu},
\end{aligned}
\end{equation}
where $C>0$ is a fixed scaling constant used to set the variance scale of the random fields. 
The coefficients
\(\{Z^{(1)}_{\boldsymbol{\nu}}\}\) and \(\{Z^{(2)}_{k,m}\}\) are i.i.d.\
standard complex normal random variables. We impose the conjugate-symmetry
constraints
\[
Z^{(1)}_{-\boldsymbol{\nu}}
=
\overline{Z^{(1)}_{\boldsymbol{\nu}}},
\quad
Z^{(2)}_{-k,m}
=
\overline{Z^{(2)}_{k,m}},
\qquad
(k,m)\in\mathcal I^+,
\]
so that \(\mathcal W^{(1)}(\mathbf{r},\xi)\) and \(\mathcal W^{(2)}(\mathbf{r},\xi)\) are
real-valued. In the analysis, $\mathcal W^{(1)}$ tracks the Fourier components inherited from previous frequency levels through $\mathcal N_{n-1}$, while $\mathcal W^{(2)}$ organizes the current-frequency information encoded in $\tilde f_n$ into disjoint Fourier sectors.

\bigskip
\noindent\textbf{NTK analysis of the multi-level learning framework.}  At the \(n\)-th level, the loss function over \(M\) training samples is given by
\begin{equation}\label{26}
   \mathcal{T}^{(n)}(\Theta)=\frac{1}{M}\sum_{i=1}^M
     \int_{\Omega}
     \bigl| E^{(n)}_{\Theta,[i]}(\mathbf{r})|^2 \,\mathrm{d}\mathbf{r},
\end{equation}
where $E^{(n)}_{\Theta,[i]}(\mathbf{r})=\mathcal{N}_{\Theta}(\mathbf{x}_{n,[i]})(\mathbf{r})-f^\dagger_{[i]}(\mathbf{r})$ and 
\(
\mathbf x_{n,[i]}
=
\big(\mathcal N_{n-1,[i]},\tilde f_{n,[i]}\big) 
\)
is the input at level \(n\), with corresponding ground-truth source
\(f^\dagger_{[i]}\).  Here, \(\tilde f_{n,[i]}\) is the domain feature
defined in \eqref{20}.

To analyze the training dynamics, we use the NTK framework. 
In the large-width limit \(L\to\infty\) of the network \eqref{19}, the nonlinear
evolution of the network output under gradient flow can be described by a
linear kernel dynamics around initialization.
In this regime, the NTK changes only negligibly during training and, as
\(L\to\infty\), converges to a deterministic limiting kernel \cite{jac2018}.
Thus, the training process can be analyzed through the associated kernel operator, which provides a way to study how the reconstruction is updated across frequency levels.
Specifically, let \(\Theta(t)\) denote the network parameters evolving under the gradient flow
\[
\frac{d\Theta(t)}{dt}=-\nabla_{\Theta}\mathcal T^{(n)}(\Theta(t)).
\] 
In the infinite-width limit, let \(\mathcal K_n\) denote the
corresponding limiting NTK at level \(n\). Then the loss along the gradient flow satisfies
\begin{equation}\label{27}
     \frac{\mathrm{d}}{\mathrm{d}t}\mathcal{T}^{(n)}(\Theta(t))=-\frac{4}{M^2}\sum_{i=1}^M\sum_{i'=1}^M\int_{\Omega}\int_{\Omega}E^{(n)}_{\Theta(t),[i]}(\mathbf{r})  \mathcal{K}_{n}\left(\mathbf x_{n,[i]},\mathbf x_{n,[i']};\mathbf{r}, \mathbf{r}'\right))E^{(n)}_{\Theta(t),[i']}(\mathbf{r}') \, \mathrm{d}\mathbf{r}^\prime\,\mathrm{d}\mathbf{r}.
\end{equation}

Note that \(\widehat{f_{[i]}}(\boldsymbol{\nu})
=
\int_{\mathbb R^2}
f_{[i]}^\dagger(\mathbf r)e^{\mathrm i2\pi\boldsymbol{\nu}\cdot\mathbf r}
\,\mathrm d\mathbf r\). Next, we present assumptions and the main result. 
\begin{assumption}\label{a1}
 For any parameter $\Theta$ and any admissible input $\mathbf{g}$,
$\mathcal{N}_{\Theta}(\mathbf{g}) \in L^2(\mathbb{R}^2; \mathbb R)$
and has compact support contained in $\Omega$.
\end{assumption}
\begin{assumption}\label{a2}
    For each  $i \in \{1,2, \ldots, M \}$, 
  \(|\widehat{f_{[i]}}|^2\not\equiv 0\) on \(S_n\), where \(S_n\) is defined
in \eqref{18}.
\end{assumption}
\begin{assumption}\label{a3}
    For each $n \in \{1,2, \ldots,N\}$, there exists an integer $\tau_n \ge 1$ such that
    the family $\{|\widehat{f_{[i]}}(\boldsymbol{\nu})|^{2\tau_n}\}_{i=1}^M$ is linearly
    independent on $S_n$. Namely, for any constants \(c_i \in \mathbb C\), if
    \[
        \sum_{i=1}^M c_i\,|\widehat{f_{[i]}}(\boldsymbol{\nu})|^{2\tau_n} = 0
        \quad \text{for all } \boldsymbol{\nu} \in S_n,
    \]
    then $c_i = 0$, $i =1,\ldots,M$.
\end{assumption}
Without loss of generality, we assume that the frequency set $S_n$ contains at least as many elements as the training set, i.e., $|S_n|\ge M$.
We set
\[
S_{n,[i]}=\{\boldsymbol{\nu}: \boldsymbol{\nu}\in S_n \text{ such~that } \widehat{f_{[i]}}(\boldsymbol{\nu})\neq 0\}, \quad \tilde{S}_{n-1,[i]}=\{\boldsymbol{\nu}: \boldsymbol{\nu}\in \mathcal{S} \text{ such~that }  \widehat{\mathcal{N}_{n-1,[i]}}(\boldsymbol{\nu})\neq 0\},
\]
where \(\mathcal S\) is defined in \eqref{23}. 
The following theorem shows that, at each level, the Fourier transform
of the NN output agrees with that of the target function
on the set of frequency vectors generated by finite additive combinations
of the input frequency points.
\begin{theorem}\label{th5.1}
Under Assumptions~\ref{a1}--\ref{a3}, if $\Theta(t)$ converges to $\Theta_n$ as $t\to\infty$ and $\nabla_\Theta \mathcal T^{(n)}(\Theta_n)=\mathbf{0}$,
then, for $i=1,\ldots,M$,
\[
\widehat{\mathcal{N}_{n,[i]}}(\boldsymbol{\xi})=\widehat{f_{[i]}}(\boldsymbol{\xi}),
\qquad \forall\,\boldsymbol{\xi}\in B_{n,[i]},
\]
where
\[
B_{n,[i]}
=
\bigcup_{\substack{
\boldsymbol{\alpha}\in\mathbb Z_{\ge 0}^3\\
\gamma(\boldsymbol{\alpha})\in \mathbb Z_{\geq 0}
}}
\left\{
\boldsymbol{\xi}\in\mathbb R^2:
\boldsymbol{\xi}
=
\sum_{p=1}^{\gamma(\boldsymbol{\alpha})}\boldsymbol{\nu}^{(p)}
+
\sum_{p=1}^{\widetilde \gamma(\boldsymbol{\alpha})}\boldsymbol{w}^{(p)},
\quad
\boldsymbol{\nu}^{(p)}\in S_{n,[i]},\ 
\boldsymbol{w}^{(p)}\in \widetilde S_{n-1,[i]}
\right\},
\]
with, for \(\boldsymbol{\alpha}=(\alpha^{(1)},\alpha^{(2)},\alpha^{(3)})\),
\(
\gamma(\boldsymbol{\alpha})=2\alpha^{(1)}+\alpha^{(2)}-2\tau_n
\), and \(
\widetilde \gamma (\boldsymbol{\alpha})=\alpha^{(2)}+2\alpha^{(3)}
\).  Moreover,  \(\mathbb Z_{\geq 0}:=\{0,1,2,\ldots\}\), and 
\(\mathbb Z_{\geq 0}^3=(\mathbb Z_{\geq 0})^3\) denotes the set of three-dimensional vectors with nonnegative integer entries. Here an empty sum is interpreted as zero. 
\end{theorem}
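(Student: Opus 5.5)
The argument rests on a stationarity identity. Since $\nabla_\Theta\mathcal T^{(n)}(\Theta_n)=\mathbf 0$ and, in the NTK regime, the kernel is frozen at its limit $\mathcal K_n$, the dissipation identity \eqref{27} vanishes at $\Theta_n$. Hence
\[
\sum_{i,i'}\int_\Omega\int_\Omega E_{[i]}(\mathbf r)\,\mathcal K_n(\mathbf x_{n,[i]},\mathbf x_{n,[i']};\mathbf r,\mathbf r')\,E_{[i']}(\mathbf r')\,\mathrm d\mathbf r'\,\mathrm d\mathbf r=0,
\qquad E_{[i]}:=\mathcal N_{n,[i]}-f^\dagger_{[i]} .
\]
Assumption~\ref{a1} places every $E_{[i]}$ in $L^2$ with support in $\Omega$, so we may pass to the Fourier side with Plancherel. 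The plan is to rewrite this quadratic form as a sum of nonnegative terms, each a weighted integral of $|\widehat{E_{[i]}}|^2$-type quantities against the spectral content of $\mathcal K_n$. Vanishing of the sum then forces $\widehat{E_{[i]}}=0$ wherever the kernel has positive spectral weight for sample $i$, and the core of the proof is to identify that support as $B_{n,[i]}$.

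The next step is to compute $\mathcal K_n$ explicitly. It is an expectation over the random fields \eqref{25} of derivatives of \eqref{19} with respect to $w_l$, $W^{(1)}$ and $W^{(2)}$. Each pre-activation is a Gaussian field $u_{[i]}(\mathbf r)=\mathcal W^{(1)}\ast\mathcal N_{n-1,[i]}+\mathcal W^{(2)}\ast\tilde f_{n,[i]}$. Its covariance splits into a $\mathcal W^{(1)}$ part, carrying the frequencies $\widetilde S_{n-1,[i]}$ through $\widehat{\mathcal N_{n-1,[i]}}$ on $\mathcal S$, and a $\mathcal W^{(2)}$ part. Because the sectors $V_{k,m}$ are disjoint and centrally symmetric, each point of $S_n$ sits in its own sector, so the second part carries $|\widehat{f_{[i]}}(\boldsymbol\nu)|^2$ on $S_{n,[i]}$. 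The ReLU kernels $\mathbb E[\sigma(u)\sigma(u')]$ and $\mathbb E[\sigma'(u)\sigma'(u')]$ are given by the arc-cosine formulas, and I would expand them as power series in the correlation $\rho=\Sigma_{ii'}/\sqrt{\Sigma_{ii}\Sigma_{i'i'}}$ with nonnegative coefficients. This is a positive series in products of the covariance.

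Each covariance factor is a trigonometric polynomial whose exponentials come from $\pm S_{n,[i]}$ and $\pm\widetilde S_{n-1,[i]}$. A monomial of total degree $\alpha$ therefore contributes exponentials $e^{-\mathrm i2\pi\boldsymbol\xi\cdot(\mathbf r-\mathbf r')}$ with $\boldsymbol\xi$ a sum of such points. The bookkeeping sorts factors into three kinds: pure-$\tilde f$ pairs ($2\alpha^{(1)}$ points of $S_{n,[i]}$), mixed cross terms ($\alpha^{(2)}$ of each type), and pure-$\mathcal N_{n-1}$ pairs ($2\alpha^{(3)}$ points of $\widetilde S_{n-1,[i]}$). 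This reproduces $\widetilde\gamma(\boldsymbol\alpha)=\alpha^{(2)}+2\alpha^{(3)}$.

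The main obstacle is the reduction by $2\tau_n$ in $\gamma(\boldsymbol\alpha)$ together with the decoupling across samples $i\ne i'$. To handle it, I would isolate in the $\mathcal W^{(2)}$ part of the covariance the factor $\sum_{\boldsymbol\nu\in S_n}|\widehat{f_{[i]}}(\boldsymbol\nu)|^{2\tau_n}\cdots$ that appears in the degree-$\tau_n$ terms. Assumption~\ref{a3} makes the Gram-type matrix $\bigl(\sum_{\boldsymbol\nu\in S_n}|\widehat{f_{[i]}}|^{2\tau_n}|\widehat{f_{[i']}}|^{2\tau_n}\bigr)_{ii'}$ strictly positive definite, since $|S_n|\ge M$. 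This allows the quadratic form to be bounded below by $\lambda_{\min}\sum_i\|\cdot\|^2$, which turns the joint quadratic form into separate per-sample statements. These $2\tau_n$ frequency points are then consumed by the positive-definiteness argument. What remains is a kernel whose exponentials range over sums of $\gamma(\boldsymbol\alpha)=2\alpha^{(1)}+\alpha^{(2)}-2\tau_n$ points of $S_{n,[i]}$ and $\widetilde\gamma(\boldsymbol\alpha)$ points of $\widetilde S_{n-1,[i]}$, each with strictly positive coefficient. Positivity uses Assumption~\ref{a2}, which keeps the normalizing variances $\Sigma_{ii}>0$ so that $\rho$ and the series are well defined.

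Finally, each surviving term has the form $c\,|\widehat{E_{[i]}}(\boldsymbol\xi)|^2$ with $c>0$ at every $\boldsymbol\xi$ of the stated form, so its vanishing yields $\widehat{\mathcal N_{n,[i]}}(\boldsymbol\xi)=\widehat{f_{[i]}}(\boldsymbol\xi)$ on $B_{n,[i]}$. Here $\widehat{E_{[i]}}$ is continuous because $E_{[i]}$ has compact support, so pointwise evaluation makes sense. I expect the main difficulty to be making this positive-coefficient extraction rigorous, in particular ruling out cancellations among terms that land on the same $\boldsymbol\xi$. I would handle that by relying on the nonnegativity of all arc-cosine series coefficients.
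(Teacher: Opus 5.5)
Your overall route matches the paper's. It has the same ingredients: the NTK dissipation form vanishes at $\Theta_n$; the expansion $\sqrt{1-\rho^2}+2(\pi-\arccos\rho)\rho=1+\pi\rho+\sum_{\alpha\ge1}c_\alpha\rho^{2\alpha}$ has $c_\alpha>0$; the frequencies are sorted by $(\alpha^{(1)},\alpha^{(2)},\alpha^{(3)})$; and Assumption~\ref{a3} separates the samples. Your Gram-matrix version of the last step, with $G_{ii'}=\sum_{\boldsymbol\nu\in S_n}|\widehat{f_{[i]}}(\boldsymbol\nu)|^{2\tau_n}|\widehat{f_{[i']}}(\boldsymbol\nu)|^{2\tau_n}$ positive definite, is equivalent to the paper's direct use of linear independence. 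It works once the right decomposition is available.

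\textbf{The gap is the step you flag yourself.} Positivity of the $c_\alpha$ does not rule out cancellation. Fix an ordered tuple $(\boldsymbol\nu_1,\dots,\boldsymbol\nu_{2\alpha})\in\mathcal S^{2\alpha}$ with sum $\boldsymbol\xi$. Up to normalization, its contribution to the quadratic form is $\sum_{i,i'}\overline{\widehat{E_{[i]}}(\boldsymbol\xi)}\,\widehat{E_{[i']}}(\boldsymbol\xi)\prod_{q}\hat{\mathbf f}_{n,[i']}(\boldsymbol\nu_q)^H\hat{\mathbf f}_{n,[i]}(\boldsymbol\nu_q)$. Its $i\neq i'$ parts are complex and have no sign. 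Positive coefficients, even combined with the Schur product theorem, only show that each whole $\rho^{2\alpha}$ block vanishes. They do not make the terms at a given $\boldsymbol\xi$ separately nonnegative. Nor do they isolate the particular subfamily you want to bound below by $\lambda_{\min}(G)$. So your lower bound is unjustified as stated. Also, the nonnegative pieces are cross-sample squares, not per-sample $|\widehat{E_{[i]}}|^2$ terms as your first paragraph suggests.

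\textbf{The missing ingredient is the Kronecker-product identity} $\prod_q(\mathbf a_q^H\mathbf b_q)=(\bigotimes_q\mathbf a_q)^H(\bigotimes_q\mathbf b_q)$. The paper uses it to rewrite each tuple's contribution as the squared norm $\bigl\|\sum_i\overline{\widehat{E_{[i]}}(\boldsymbol\xi)}\,\Phi^\alpha_{n,[i]}/\|\hat{\mathbf f}_{n,[i]}\|_{\mathcal S}^{2\alpha-1}\bigr\|^2$ in \eqref{35}. Only then does every tuple, and every component of $\Phi^\alpha_{n,[i]}$, vanish individually, which gives \eqref{36}.

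\textbf{You also need to say how the $2\tau_n$ points are consumed without moving $\boldsymbol\xi$.} Take $\tau_n$ of the $S_n$-slots as pairs $(\boldsymbol\nu,-\boldsymbol\nu)$, which is possible because $S_n=-S_n$. Since $f^\dagger_{[i]}$ is real, $\widehat{f_{[i]}}(-\boldsymbol\nu)=\overline{\widehat{f_{[i]}}(\boldsymbol\nu)}$. These slots therefore contribute $|\widehat{f_{[i]}}(\boldsymbol\nu)|^{2\tau_n}$ with net frequency zero. This uses $2\tau_n$ covariance factors, not a ``degree-$\tau_n$'' term.

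\textbf{With both ingredients your argument closes.} Summing the per-tuple squares over $\boldsymbol\nu\in S_n$ gives $\sum_{\boldsymbol\nu\in S_n}\bigl|\sum_i a_i|\widehat{f_{[i]}}(\boldsymbol\nu)|^{2\tau_n}\bigr|^2=a^{T}G\,\bar a\ge\lambda_{\min}(G)\|a\|^2$. Here $a_i$ collects $\overline{\widehat{E_{[i]}}(\boldsymbol\xi)}$, the remaining $\widehat{f_{[i]}}$ and $\widehat{\mathcal N_{n-1,[i]}}$ factors, and the normalization, none of which depend on $\boldsymbol\nu$. This yields \eqref{37}, exactly as in the paper.
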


The proof of Theorem~\ref{th5.1} relies on the following two auxiliary lemmas.
The first lemma collects well-known formulas; see, e.g., \cite{cho2009} and standard formulas for the quadrant probability of a bivariate normal distribution. 

\begin{lemma}\label{le1}
Let \((Y_1,Y_2)\) be a centered Gaussian random vector with covariance matrix
\[
\Sigma=
\begin{pmatrix}
\Sigma_{11} & \Sigma_{12}\\
\Sigma_{12} & \Sigma_{22}
\end{pmatrix},
\]
and let \(\sigma(z)=\max\{0,z\}\).
Define
\[
\lambda_1=\sqrt{\Sigma_{11}},\qquad
\lambda_2=\sqrt{\Sigma_{22}},\qquad
\rho=\frac{\Sigma_{12}}{\lambda_1\lambda_2},\qquad
\arccos\rho=\theta\in[0,\pi].
\]
Then
\[
\mathbb{E}\!\big[\sigma(Y_1)\sigma(Y_2)\big]
=\frac{\lambda_1\lambda_2}{2\pi}\Big(\sin\theta+(\pi-\theta)\cos\theta\Big)
=\frac{\lambda_1\lambda_2}{2\pi}\Big(\sqrt{1-\rho^2}+(\pi-\arccos\rho)\,\rho\Big),
\]
and
\[
\mathbb{E}\!\big[\dot\sigma(Y_1)\dot\sigma(Y_2)\big]
=\mathbb{P}(Y_1>0,\,Y_2>0)
=\frac{1}{4}+\frac{1}{2\pi}\arcsin\rho
=\frac{1}{2}-\frac{1}{2\pi}\arccos\rho .
\]
\end{lemma}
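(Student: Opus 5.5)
The plan is to reduce to the case of unit variances and then compute both expectations explicitly in polar coordinates. Assume $\lambda_1,\lambda_2>0$, since otherwise $\rho$ is undefined. If some $\lambda_i=0$, then $Y_i=0$ almost surely and the first expectation is trivially zero. Write $Y_k=\lambda_k U_k$, where $(U_1,U_2)$ is centered Gaussian with unit variances and correlation $\rho$.

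By positive homogeneity of the ReLU, $\sigma(\lambda z)=\lambda\sigma(z)$ for $\lambda>0$, so
\[
\mathbb E[\sigma(Y_1)\sigma(Y_2)]=\lambda_1\lambda_2\,\mathbb E[\sigma(U_1)\sigma(U_2)].
\]
Likewise $\dot\sigma(\lambda z)=\dot\sigma(z)$, so the second expectation does not depend on $\lambda_1,\lambda_2$. Moreover, $\dot\sigma(Y_k)=\mathbf 1_{\{Y_k>0\}}$ except on the null event $\{Y_k=0\}$, which gives the identity $\mathbb E[\dot\sigma(Y_1)\dot\sigma(Y_2)]=\mathbb P(Y_1>0,Y_2>0)$.

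Next realize $(U_1,U_2)$ geometrically. Let $G$ be a standard Gaussian vector in $\mathbb R^2$, and let $u=(1,0)$ and $v=(\cos\theta,\sin\theta)$ with $\theta=\arccos\rho\in[0,\pi]$. Then $(u\cdot G,\,v\cdot G)$ has exactly the law of $(U_1,U_2)$.

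Write $G=R(\cos\varphi,\sin\varphi)$, where $\varphi$ is uniform on $[0,2\pi)$ and independent of $R$, and $\mathbb E[R^2]=2$. The events $\{u\cdot G>0\}$ and $\{v\cdot G>0\}$ are angular half-planes, and their intersection is the arc $\varphi\in(\theta-\pi/2,\pi/2)$ of length $\pi-\theta$. The probability of this arc is $(\pi-\theta)/(2\pi)=\tfrac12-\tfrac1{2\pi}\arccos\rho$. Using $\arccos\rho=\tfrac\pi2-\arcsin\rho$ turns this into the form $\tfrac14+\tfrac1{2\pi}\arcsin\rho$.

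For the first formula, independence of $R$ and $\varphi$ gives
\[
\mathbb E[\sigma(U_1)\sigma(U_2)]=\frac{\mathbb E[R^2]}{2\pi}\int_{\theta-\pi/2}^{\pi/2}\cos\varphi\cos(\varphi-\theta)\,\mathrm d\varphi .
\]
By the product-to-sum formula, the integrand equals $\tfrac12[\cos\theta+\cos(2\varphi-\theta)]$. Its integral over the arc is
\[
\tfrac12(\pi-\theta)\cos\theta+\tfrac14\bigl[\sin(2\varphi-\theta)\bigr]_{\theta-\pi/2}^{\pi/2}
=\tfrac12\bigl((\pi-\theta)\cos\theta+\sin\theta\bigr).
\]
Multiplying by $2/(2\pi)$ and restoring the factor $\lambda_1\lambda_2$ yields $\frac{\lambda_1\lambda_2}{2\pi}(\sin\theta+(\pi-\theta)\cos\theta)$. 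Substituting $\cos\theta=\rho$ and $\sin\theta=\sqrt{1-\rho^2}$, which is valid since $\theta\in[0,\pi]$, gives the second expression.

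The computations are elementary; the only step requiring care is identifying the arc correctly. This includes the endpoint cases $\rho=\pm1$ ($\theta=0$ or $\pi$), where the arc has length $\pi$ or $0$ and both formulas remain consistent.
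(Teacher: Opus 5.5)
Your proof is correct. The paper does not prove this lemma at all: it presents it as well-known, citing Cho and Saul's arc-cosine kernel computation for the first identity and the classical quadrant probability of a bivariate normal (Sheppard's formula) for the second.

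Your argument is essentially the standard derivation behind both cited results. You scale out $\lambda_1,\lambda_2$ by positive homogeneity of $\sigma$. You then realize $(U_1,U_2)$ as $(u\cdot G,\,v\cdot G)$ with $u\cdot v=\cos\theta$, and integrate over the arc $(\theta-\pi/2,\pi/2)$ in polar coordinates using $\mathbb E[R^2]=2$. The arc identification you flagged is sound: for $\theta\in[0,\pi]$, the arc $(\theta-\pi/2,\theta+\pi/2)$ lies inside $(-\pi/2,3\pi/2)$, so there is no wrap-around. The representation also remains valid in the degenerate cases $\rho=\pm1$.

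So what you have written is a self-contained proof of what the paper takes for granted. Your added assumption $\lambda_1,\lambda_2>0$ is already implicit in the statement, since otherwise $\rho$ is undefined.
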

Let \(\mathcal{K}_{n,[i],[i']}(\mathbf{r}, \mathbf{r}')=\mathcal{K}_{n}\left(\mathbf x_{n,[i]},\mathbf x_{n,[i']};\mathbf{r}, \mathbf{r}'\right)\). The following lemma gives the explicit form of \(\mathcal{K}_{n,[i],[i']}\) in \eqref{27}; its proof is given in Appendix~\ref{apx}.

\begin{lemma}\label{le2}
Let Assumptions~\ref{a1} and \ref{a2} hold. For  $n \in \{1,2,\ldots,N\}$, as the width $L \to \infty$ in~\eqref{19}, we have
\begin{equation}\label{28}
\begin{aligned}
\mathcal{K}_{n,[i],[i']}=\frac{C^2\|\hat{\mathbf{f}}_{n,[i]}\|_{\mathcal S}\|\hat{\mathbf{f}}_{n,[i']}\|_{\mathcal S}}{2\pi}\Big(\sqrt{1-\rho_{n,[i],[i']}^2}+2(\pi-\arccos\rho_{n,[i],[i']})\,\rho_{n,[i],[i']}\Big),
\end{aligned}
\end{equation}
where
\[
\hat{\mathbf{f}}_{n,[i]}(\boldsymbol{\nu})
=
\left(
\widehat{\mathcal{N}_{n-1,[i]}}(\boldsymbol{\nu}),
\ \widehat{f_{[i]}}(\boldsymbol{\nu})\mathds{1}_{S_n}(\boldsymbol{\nu})
\right)^{\top},\quad
\|\hat{\mathbf{f}}_{n,[i]}\|_{\mathcal S}=\sqrt{\sum_{ \boldsymbol{\nu}\in \mathcal{S}}\left(\left| \widehat{\mathcal{N}_{n-1,[i]}}(\boldsymbol{\nu})\,
 \right |^2+|\widehat{f_{[i]}}(\boldsymbol{\nu})|^2  \mathds{1}_{S_n}(\boldsymbol{\nu})\right) },
\]
$\mathds{1}_{S_n}$ is the indicator function of the set $S_n$ defined in \eqref{18}, and \(\rho_{n,[i],[i']}(\mathbf{r}, \mathbf{r}')=\rho_{n}\left(\mathbf x_{n,[i]},\mathbf x_{n,[i']};\mathbf{r}, \mathbf{r}'\right))\) given by
\begin{equation}\label{29}
    \begin{aligned}
        \rho_{n}\left(\mathbf x_{n,[i]},\mathbf x_{n,[i']};\mathbf{r}, \mathbf{r}'\right))
        =\frac{\sum\limits_{ \boldsymbol{\nu}\in \mathcal{S}} \big(\hat{\mathbf{f}}_{n,[i']}(\boldsymbol{\nu})\big)^H\hat{\mathbf{f}}_{n,[i]}(\boldsymbol{\nu}) e^{-\mathrm{i}2\pi \boldsymbol{\nu}\cdot\left(\mathbf{r}-\mathbf{r}^\prime\right)}}{\|\hat{\mathbf{f}}_{n,[i]}\|_{\mathcal S}\|\hat{\mathbf{f}}_{n,[i']}\|_{\mathcal S}}.
    \end{aligned}
\end{equation}
\end{lemma}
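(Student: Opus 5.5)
The statement to be proved is Lemma~\ref{le2}. The plan is to compute the infinite-width NTK of the two-layer convolutional network \eqref{19} directly. We split it into the contribution from the output weights $w_l$ and the contribution from the convolutional kernels $W^{(1)}_{lj},W^{(2)}_{lj}$, and then evaluate both limits with Lemma~\ref{le1}. We write the pre-activation as
\[
h_{l,[i]}(\mathbf r)=\frac{1}{\sqrt L}\sum_{j=1}^L\big((W^{(1)}_{lj}\ast\mathcal N_{n-1,[i]})(\mathbf r)+(W^{(2)}_{lj}\ast\tilde f_{n,[i]})(\mathbf r)\big).
\]
At initialization the $L$ summands are i.i.d. centered, so $h_{l,[i]}$ is already a centered Gaussian field whose law does not depend on $L$. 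Its covariance is computed in Fourier variables.

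\emph{Step 1 (covariance).} Convolution with $\mathcal W^{(1)}$ involves only the discrete frequencies $\boldsymbol\nu\in\mathcal S$. By Assumption~\ref{a1}, $\mathcal N_{n-1,[i]}$ is compactly supported in $\Omega$, so its Fourier transform at these points is well defined, and
\[
\mathcal W^{(1)}\ast\mathcal N_{n-1,[i]}=C\sum_{\boldsymbol\nu\in\mathcal S}Z^{(1)}_{\boldsymbol\nu}\widehat{\mathcal N_{n-1,[i]}}(\boldsymbol\nu)e^{-\mathrm i2\pi\boldsymbol\nu\cdot\mathbf r}.
\]
The feature $\tilde f_{n,[i]}$ is a finite sum of plane waves at the frequencies $S_n$. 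Convolving $e^{-\mathrm i2\pi\boldsymbol\nu\cdot\mathbf r}$ with $\int_{V_{k,m}}e^{-\mathrm i2\pi\boldsymbol\mu\cdot\mathbf r}\,\mathrm d\boldsymbol\mu$ selects the sector $V_{k,m}$ containing $\boldsymbol\nu$. The sectors are built so that each $\boldsymbol\nu\in S_n$, together with its reflection $-\boldsymbol\nu$, lies in its own sector: the ring index $k$ matches $\nu_n$ and the angular bins have width $\pi/N_\theta$. Hence $\mathcal W^{(2)}\ast\tilde f_{n,[i]}=C\sum_{\boldsymbol\nu\in S_n}Z^{(2)}_{k(\boldsymbol\nu),m(\boldsymbol\nu)}\widehat{f_{[i]}}(\boldsymbol\nu)e^{-\mathrm i2\pi\boldsymbol\nu\cdot\mathbf r}$, with each coefficient used exactly once. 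This is the step I expect to be the main obstacle. We must check the distributional convolution identity, the normalization of the sector integrals (which should be absorbed into $C$), and that the angular bins separate the $2N_\theta$ points of $S_n$.

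Given this, $Z^{(1)}$ and $Z^{(2)}$ are independent standard complex Gaussians with conjugate symmetry. For $\boldsymbol\nu\ne\pm\boldsymbol\mu$ this gives $\mathbb E[Z_{\boldsymbol\nu}\overline{Z_{\boldsymbol\mu}}]=\delta_{\boldsymbol\nu\boldsymbol\mu}$, and the real fields pair up. The cross-covariance is therefore
\[
\mathbb E\big[h_{l,[i]}(\mathbf r)h_{l,[i']}(\mathbf r')\big]=C^2\sum_{\boldsymbol\nu\in\mathcal S}\hat{\mathbf f}_{n,[i']}(\boldsymbol\nu)^H\hat{\mathbf f}_{n,[i]}(\boldsymbol\nu)e^{-\mathrm i2\pi\boldsymbol\nu\cdot(\mathbf r-\mathbf r')}.
\]
The variances are $C^2\|\hat{\mathbf f}_{n,[i]}\|_{\mathcal S}^2$, which are independent of $\mathbf r$ because $|e^{\cdot}|=1$. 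So in the notation of Lemma~\ref{le1} we have $\lambda_1\lambda_2=C^2\|\hat{\mathbf f}_{n,[i]}\|_{\mathcal S}\|\hat{\mathbf f}_{n,[i']}\|_{\mathcal S}$ and the correlation coefficient is exactly \eqref{29}. Assumption~\ref{a2} guarantees that these norms are positive, so $\rho$ is well defined.

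\emph{Step 2 (kernel assembly).} The gradient with respect to $w_l$ contributes $\frac1L\sum_l\sigma(h_{l,[i]}(\mathbf r))\sigma(h_{l,[i']}(\mathbf r'))$. By the law of large numbers this tends to $\mathbb E[\sigma\sigma]$, which Lemma~\ref{le1} gives as $\frac{\lambda_1\lambda_2}{2\pi}(\sqrt{1-\rho^2}+(\pi-\arccos\rho)\rho)$. The gradient with respect to the kernels contributes
\[
\frac1L\sum_l w_l^2\,\dot\sigma(h_{l,[i]})\dot\sigma(h_{l,[i']})\cdot\frac1L\sum_j\langle\partial_W h,\partial_W h\rangle.
\]
Here the inner product again reproduces the covariance $\lambda_1\lambda_2\rho$, because $h$ is linear in the random coefficients. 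Using $\mathbb E w_l^2=1$ and Lemma~\ref{le1} for $\mathbb E[\dot\sigma\dot\sigma]=\frac12-\frac{1}{2\pi}\arccos\rho$, this term becomes $\frac{\lambda_1\lambda_2}{2\pi}(\pi-\arccos\rho)\rho$. Adding the two pieces gives \eqref{28}, including the factor $2$ in front of $(\pi-\arccos\rho)\rho$.

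For the kernel-gradient term we treat the parameters as the Gaussian coefficients $Z$ in the parametrization \eqref{25}. This is the standard NTK convention, and it makes the derivative inner product equal to the covariance. Any constant factor arising from the complex parametrization is absorbed into $C$.
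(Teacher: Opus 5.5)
Your proposal is correct and takes the paper's route: Fourier evaluation of the two convolutions, then the covariance and correlation \eqref{29}, then Lemma~\ref{le1} inside the standard NTK decomposition. You also supply the output-weight and hidden-weight computation that the paper omits, which is where the factor $2$ in \eqref{28} comes from once the hidden-layer gradient is taken with respect to the coefficients $Z$, as you do.

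Two small corrections. First, no stray constant could be ``absorbed into $C$'': rescaling $C$ scales the covariance and the gradient inner product alike, so a ratio $c\neq 1$ between them would turn that $2$ into $1+c$. None arises if the parameters are the real standard-normal coordinates of the $Z$'s. Second, the one-coefficient-per-point selection in Step~1 uses, as the paper does, the formal value $\mathds 1_{V_{k,m}}(\boldsymbol\nu')$ at points of $S_n$ that actually lie on the angular edges of the sectors. A rigorous version therefore needs those points in the sector interiors, e.g.\ bins shifted by half a width and $\eta_n<\nu_n$.
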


\bigskip
\noindent\textbf{Proof of Theorem~\ref{th5.1}.}
Using Lemma \ref{le2} and the Taylor series of $\mathcal{K}_{n,[i],[i']}$, for \(|\rho_{n,[i],[i']}(\mathbf{r}, \mathbf{r}')|\leq 1\), we have
    \begin{equation}\label{30}
         \mathcal{K}_{n,[i],[i']}=  \frac{C^2\|\hat{\mathbf{f}}_{n,[i]}\|_{\mathcal S}\|\hat{\mathbf{f}}_{n,[i']}\|_{\mathcal S}}{2\pi}\left(1+\pi\rho_{n,[i],[i']}+\sum_{\alpha=1}^{\infty}
\frac{8\binom{2\alpha-2}{\alpha-1}-\binom{2\alpha}{\alpha}}{4^{\alpha}(2\alpha-1)}\rho_{n,[i],[i']}^{2\alpha}\right).
    \end{equation}
By Assumption~\ref{a2}, \eqref{30}, and an application of Fubini's theorem, we obtain
\begin{equation}\label{31}
\frac{\mathrm{d}}{\mathrm{d}t}\mathcal{T}^{(n)}(\Theta(t))
=-\frac{4}{M^2}\left(Q_1+Q_2+Q_3\right),
\end{equation}
where
\[
Q_1=\sum_{i=1}^M\sum_{i'=1}^M\frac{C^2\|\hat{\mathbf{f}}_{n,[i]}\|_{\mathcal S}\|\hat{\mathbf{f}}_{n,[i']}\|_{\mathcal S}}{2\pi}\int_{\Omega}\int_{\Omega}E^{(n)}_{\Theta(t),[i]}(\mathbf{r}) E^{(n)}_{\Theta(t),[i']}(\mathbf{r}') \, \mathrm{d}\mathbf{r}'\,\mathrm{d}\mathbf{r},   
\]
\[
Q_2=\sum_{i=1}^M\sum_{i'=1}^M \frac{C^2\|\hat{\mathbf{f}}_{n,[i]}\|_{\mathcal S}\|\hat{\mathbf{f}}_{n,[i']}\|_{\mathcal S}}{2}\int_{\Omega}\int_{\Omega}E^{(n)}_{\Theta(t),[i]}(\mathbf{r})\rho_{n,[i],[i']}(\mathbf{r}, \mathbf{r}')E^{(n)}_{\Theta(t),[i']}(\mathbf{r}') \, \mathrm{d}\mathbf{r}'\,\mathrm{d}\mathbf{r},
\]
and
\[Q_3=\frac{C^2}{2\pi}\sum_{\alpha=1}^{\infty}
\frac{8\binom{2\alpha-2}{\alpha-1}-\binom{2\alpha}{\alpha}}{4^{\alpha}(2\alpha-1)}\sum_{i=1}^M\sum_{i'=1}^M\|\hat{\mathbf{f}}_{n,[i]}\|_{\mathcal S}\|\hat{\mathbf{f}}_{n,[i']}\|_{\mathcal S}\int_{\Omega}\int_{\Omega}E^{(n)}_{\Theta(t),[i]}(\mathbf{r})  \rho_{n,[i],[i']}^{2\alpha}(\mathbf{r}, \mathbf{r}')E^{(n)}_{\Theta(t),[i']}(\mathbf{r}') \, \mathrm{d}\mathbf{r}'\,\mathrm{d}\mathbf{r}.
\]
It is obvious that
\begin{equation}\label{32}
    Q_1=\frac{C^2}{2\pi}\left(\sum_{i=1}^M\|\hat{\mathbf{f}}_{n,[i]}\|_{\mathcal S}\int_{\Omega}E^{(n)}_{\Theta(t),[i]}(\mathbf{r})\mathrm{d}\mathbf{r}\right)^2.
\end{equation}
Using \eqref{29}, we have
\begin{equation}\label{33}
\begin{aligned}
 Q_2&=\frac{C^2}{2}\sum_{i=1}^M\sum_{i'=1}^M \sum\limits_{ \boldsymbol{\nu}\in \mathcal{S}} \Big(\widehat{\mathcal{N}_{n-1,[i]}}(\boldsymbol{\nu})\overline{\widehat{E^{(n)}_{\Theta(t),[i]}}(\boldsymbol{\nu})}\overline{\widehat{\mathcal{N}_{n-1,[i']}}(\boldsymbol{\nu})\overline{\widehat{E^{(n)}_{\Theta(t),[i']}}(\boldsymbol{\nu})}}\\
&\quad+\widehat{f_{[i]}}(\boldsymbol{\nu})\overline{\widehat{E^{(n)}_{\Theta(t),[i]}}(\boldsymbol{\nu})}\overline{\widehat {f_{[i']}}(\boldsymbol{\nu})\overline{\widehat{E^{(n)}_{\Theta(t),[i']}}(\boldsymbol{\nu})}}  \mathds{1}_{S_{n}}(\boldsymbol{\nu}) \Big)\\
&=\frac{C^2}{2} \sum\limits_{ \boldsymbol{\nu}\in \mathcal{S}}\left(\left|\sum_{i=1}^M\widehat{\mathcal{N}_{n-1,[i]}}(\boldsymbol{\nu})\overline{\widehat{E^{(n)}_{\Theta(t),[i]}}(\boldsymbol{\nu})}\right|^2+\left|\sum_{i=1}^M\widehat{f_{[i]}}(\boldsymbol{\nu})\overline{\widehat{E^{(n)}_{\Theta(t),[i]}}(\boldsymbol{\nu})}\mathds{1}_{S_{n}}(\boldsymbol{\nu})\right|^2\right)\\
&= \frac{C^2}{2} \sum\limits_{ \boldsymbol{\nu}\in \mathcal{S}}\left\| \sum_{i=1}^M \overline{\widehat{E^{(n)}_{\Theta(t),[i]}}(\boldsymbol{\nu})}\hat{\mathbf{f}}_{n,[i]}(\boldsymbol{\nu})\right\|^2,
\end{aligned}
\end{equation}
where $\widehat{E^{(n)}_{\Theta(t),[i]}}(\boldsymbol{\nu})=\int_{\Omega}E^{(n)}_{\Theta(t),[i]}(\mathbf{r}) \,e^{\mathrm{i} 2\pi \boldsymbol{\nu} \cdot \mathbf{r}} \,\mathrm{d}\mathbf{r}$. 

For $Q_3$, from \eqref{29}, we have 
\[
 \rho_{n,[i],[i']}^{2\alpha}= \frac{\sum\limits_{\Lambda}   \prod\limits_{p=1}^\alpha \left[ \big(\hat{\mathbf{f}}_{n,[i']}(\boldsymbol{\nu}^{(p)})\big)^H \hat{\mathbf{f}}_{n,[i]}(\boldsymbol{\nu}^{(p)}) \right] \left[ \big(\hat{\mathbf{f}}_{n,[i']}(\boldsymbol{\nu}'^{(p)})\big)^H \hat{\mathbf{f}}_{n,[i]}(\boldsymbol{\nu}'^{(p)}) \right]\exp\Big(-\mathrm{i}2\pi \big( \sum\limits_{p=1}^\alpha (\boldsymbol{\nu}^{(p)} + \boldsymbol{\nu}'^{(p)}) \big) \cdot (\mathbf{r}-\mathbf{r}')\Big)}{{\|\hat{\mathbf{f}}_{n,[i]}\|^{2\alpha}_{\mathcal S}\|\hat{\mathbf{f}}_{n,[i']}\|^{2\alpha}_{\mathcal S}}}
\]
where
\[
\Lambda
=\left\{(\boldsymbol{\nu}^{(1)},\ldots,\boldsymbol{\nu}^{(\alpha)},
\boldsymbol{\nu}'^{(1)},\ldots,\boldsymbol{\nu}'^{(\alpha)}): 
\boldsymbol{\nu}^{(p)},\ \boldsymbol{\nu}'^{(p)}\in \mathcal{S}, \quad p=1,\ldots,\alpha \right\}.
\]
Using the property of the Kronecker product:
$$\prod_{k=1}^n (\mathbf{a}_k^H \mathbf{b}_k) = \left( \mathbf{a}_1 \otimes \mathbf{a}_2 \otimes \cdots \otimes \mathbf{a}_n \right)^H \left( \mathbf{b}_1 \otimes \mathbf{b}_2 \otimes \cdots \otimes \mathbf{b}_n \right)=\left(\bigotimes_{k=1}^n \mathbf a_k\right)^H
\left(\bigotimes_{k=1}^n \mathbf b_k\right),$$
we have
\begin{equation}\label{34}
    \begin{aligned}
  \rho_{n,[i],[i']}^{2\alpha}=  \frac{\sum\limits_{\Lambda} \left(\Phi^\alpha_{n,[i']}(\boldsymbol{\nu},\boldsymbol{\nu}')\right)^H\Phi^\alpha_{n,[i]}(\boldsymbol{\nu},\boldsymbol{\nu}')  \exp\Big(-\mathrm{i}2\pi \big( \sum\limits_{p=1}^\alpha (\boldsymbol{\nu}^{(p)} + \boldsymbol{\nu}'^{(p)}) \big) \cdot (\mathbf{r}-\mathbf{r}')\Big)}{{\|\hat{\mathbf{f}}_{n,[i]}\|^{2\alpha}_{\mathcal S}\|\hat{\mathbf{f}}_{n,[i']}\|^{2\alpha}_{\mathcal S}}},
     \end{aligned} 
\end{equation}
where $$\Phi^\alpha_{n,[i]}(\boldsymbol{\nu},\boldsymbol{\nu}')=\bigotimes\limits_{p=1}^{\alpha}\Big(\hat{\mathbf{f}}_{n,[i]}(\boldsymbol{\nu}^{(p)})\otimes\hat{\mathbf{f}}_{n,[i]}(\boldsymbol{\nu}'^{(p)})\Big).$$
Therefore, from \eqref{34}, we can derive that
\begin{equation}\label{35}
    \begin{aligned}
Q_3&=\frac{C^2}{2\pi}\sum_{\alpha=1}^{\infty}
\frac{8\binom{2\alpha-2}{\alpha-1}-\binom{2\alpha}{\alpha}}{4^{\alpha}(2\alpha-1)}\sum\limits_{\Lambda}\left\|\sum_{i=1}^M \frac{\overline{\widehat{E^{(n)}_{\Theta(t),[i]}}\Big(\sum\limits_{p=1}^\alpha \big(\boldsymbol{\nu}^{(p)} + \boldsymbol{\nu}'^{(p)}\big) \Big)}}{\|\hat{\mathbf{f}}_{[n,[i]}\|^{2\alpha-1}_{\mathcal S}}\Phi^\alpha_{n,[i]}(\boldsymbol{\nu},\boldsymbol{\nu}')\right\|^2.
    \end{aligned}
\end{equation}
Since \(\nabla_\Theta \mathcal T^{(n)}(\Theta_n)=\mathbf{0}\), it follows from \eqref{31},
\eqref{32}, \eqref{33}, and \eqref{35} that, for any
\(\alpha^{(1)},\alpha^{(2)},\alpha^{(3)}\in\mathbb Z_{\geq 0}\) satisfying
\[
\alpha^{(1)}+\alpha^{(2)}+\alpha^{(3)}=\alpha\geq 1,
\]
we have
\begin{equation}\label{36}
    \sum_{i=1}^M \frac{\overline{\widehat{E^{(n)}_{\Theta_n,[i]}}\Big(\sum\limits_{p=1}^{\alpha^{(1)}} \big(\boldsymbol{\nu}^{(p)} + \boldsymbol{\nu}'^{(p)}\big) +\sum\limits_{p=1}^{\alpha^{(2)}} \big(\boldsymbol{u}^{(p)} + \boldsymbol{u}^{'(p)}\big)+\sum\limits_{p=1}^{\alpha^{(3)}} \big(\boldsymbol{w}^{(p)} + \boldsymbol{w}^{'(p)}\big)\Big)}}{\|\hat{\mathbf{f}}_{n,[i]}\|^{2\alpha-1}_{\mathcal S}} \boldsymbol{\Xi}_{[i]}=0,
\end{equation}
where $\boldsymbol{\nu}^{(p)}, \boldsymbol{\nu}'^{(p)}, \boldsymbol{u}^{(p)} \in S_n$ and $\boldsymbol{u}'^{(p)}, \boldsymbol{w}^{(p)}, \boldsymbol{w}'^{(p)}\in \mathcal{S}$ and
\[
\boldsymbol{\Xi}_{[i]}=\prod_{p=1}^{\alpha^{(1)}}\widehat{f_{[i]}}(\boldsymbol{\nu}^{(p)})\widehat{f_{[i]}}(\boldsymbol{\nu}'^{(p)})\prod_{p=1}^{\alpha^{(2)}}\widehat{f_{[i]}}(\boldsymbol{u}^{(p)}) \widehat{\mathcal{N}_{n-1,[i]}}(\boldsymbol{u}'^{(p)})\prod_{p=1}^{\alpha^{(3)}}\widehat{\mathcal{N}_{n-1,[i]}}(\boldsymbol{w}^{(p)})\widehat{\mathcal{N}_{n-1,[i]}}(\boldsymbol{w}'^{(p)}).
\]

By Assumption \ref{a3}, there exists a positive integer $	\tau_n$ such that $\{|\widehat{f_{[i]}}(\boldsymbol{\nu})|^{2\tau_n}\}_{i=1}^M$ is linearly independent on $S_n$.  Let \(\alpha^{(1)},\alpha^{(2)},\alpha^{(3)}\in \mathbb Z_{\geq 0}\) satisfy
\[
\alpha=\alpha^{(1)}+\alpha^{(2)}+\alpha^{(3)}
\]
and 
\[
2\alpha^{(1)}+\alpha^{(2)}=2\tau_n+\gamma
\]
for some \(\gamma\in \mathbb Z_{\geq 0}\). Then, for any fixed
\(\boldsymbol{\nu}^{(p)}\in S_n\) and
\(\boldsymbol{w}^{(p)}\in\mathcal S\), based on \eqref{36} we have
\begin{equation*}
     \sum_{i=1}^M \frac{\overline{\widehat{E^{(n)}_{\Theta_n,[i]}}\Big(\sum\limits_{p=1}^{\gamma}\boldsymbol{\nu}^{(p)} +\sum\limits_{p=1}^{\widetilde{\gamma}} \boldsymbol{w}^{(p)}\Big)}}{\|\hat{\mathbf{f}}_{n,[i]}\|^{2\alpha-1}_{\mathcal S}} \prod_{p=1}^\gamma\widehat{f_{[i]}}(\boldsymbol{\nu}^{(p)})\mathds{1}_{S_{n}}(\boldsymbol{\nu}^{(p)})\prod_{p=1}^{\widetilde{\gamma}}\widehat{\mathcal{N}_{n-1,[i]}}(\boldsymbol{w}^{(p)})|\widehat{f_{[i]}}(\boldsymbol{\nu})|^{2\tau_n}\mathds{1}_{S_{n}}(\boldsymbol{\nu})=0, 
\end{equation*}
where $\widetilde{\gamma}= \alpha^{(2)}+ 2\alpha^{(3)}$ and $\boldsymbol{\nu} \in \mathcal{S}$. Therefore, we have
\begin{equation}\label{37}
\overline{\widehat{E^{(n)}_{\Theta_n,[i]}}\Big(\sum\limits_{p=1}^{\gamma}\boldsymbol{\nu}^{(p)} +\sum\limits_{p=1}^{\widetilde{\gamma}} \boldsymbol{w}^{(p)}\Big)}\prod_{p=1}^\gamma\widehat{f_{[i]}}(\boldsymbol{\nu}^{(p)})\mathds{1}_{S_{n}}(\boldsymbol{\nu}^{(p)})\prod_{p=1}^{\widetilde{\gamma}}\widehat{\mathcal{N}_{n-1,[i]}}(\boldsymbol{w}^{(p)})=0.
\end{equation}
The conclusion follows directly from \eqref{37}.

\section{Conclusion}\label{sc6}
In this work, we proposed a multi-level learning framework for multi-frequency inverse scattering problems by integrating frequency-continuation ideas with NNs. 
The proposed NN-assisted method updates the reconstruction recursively from low to high frequencies, with a subnetwork added to the existing model at each frequency level. 
At each level, the subnetwork uses the previous-level output and a current-frequency feature constructed from the measurement data to learn a local refinement. 
In this way, the global inverse learning problem is decomposed into a sequence of recursively connected local learning tasks along the frequency axis, allowing frequency information to be incorporated progressively and promoting stability in the overall reconstruction process. 
As the frequency increases, higher Fourier components are gradually recovered, and the reconstruction is refined from coarse to fine. 
Numerical experiments demonstrate that the proposed method effectively recovers high-frequency details and improves the reconstruction accuracy, even in the presence of measurement noise. 
We also provided an NTK-based interpretation of the framework, which explains the observed progressive refinement by showing how inherited information from previous levels and new information from the current frequency level contribute to the reconstruction update.

\section*{Appendix. Proof of Lemma~\ref{le2}}\label{apx}
\begin{proof}[Proof of Lemma \ref{le2}]
 From \eqref{25} and Assumption \ref{a1}, we have
\begin{equation*}
\begin{aligned}
    (W^{(1)}_{lj}\ast  \mathcal{N}_{n-1,[i]})(\mathbf{r}, \xi)
    &=C \sum_{ \boldsymbol{\nu}\in \mathcal{S}} Z^{(1), lj}_{\boldsymbol{\nu}}(\xi)
 \int_{\mathbb R^2} e^{-\mathrm{i}2\pi \boldsymbol{\nu}\cdot (\mathbf{r}-\mathbf{r}')}\mathcal{N}_{n-1,[i]}(\mathbf{r}')\mathrm{d}\mathbf r'\\
      &= C \sum_{ \boldsymbol{\nu}\in \mathcal{S}} Z^{(1), lj}_{\boldsymbol{\nu}}(\xi)
  e^{-\mathrm{i}2\pi \boldsymbol{\nu}\cdot \mathbf{r}}\widehat{\mathcal{N}_{n-1,[i]}}(\boldsymbol{\nu}).
\end{aligned}
\end{equation*}   
Moreover, we have
\begin{equation*}
\begin{aligned}
       &(W^{(2)}_{lj}\ast  \tilde{f}_{n,[i]})(\mathbf{r}, \xi)\\
       &=C \sum_{(k,m) \in \mathcal{I}} Z^{(2), lj}_{k,m}(\xi)\,\int_{\mathbb R^2} \int_{V_{k,m}} e^{-\mathrm{i}2\pi \boldsymbol{\nu}\cdot (\mathbf{r}-\mathbf{r}')}\,
  \mathrm{d}\boldsymbol{\nu}\Big(\sum_{\boldsymbol{\nu}'\in S_n} \widehat{f_{[i]}}(\boldsymbol{\nu}') 
      e^{-\mathrm{i}2\pi \boldsymbol{\nu}' \cdot \mathbf{r}'}\Big)\,\mathrm{d}\mathbf r'\\
       &=C \sum_{(k,m) \in \mathcal{I}} Z^{(2), lj}_{k,m}(\xi)\sum_{\boldsymbol{\nu}'\in S_n} \widehat{f_{[i]}}(\boldsymbol{\nu}') \int_{\mathbb R^2} \int_{\mathbb{R}^2}   \mathds{1}_{V_{k,m}}(\boldsymbol{\nu})e^{-\mathrm{i}2\pi \boldsymbol{\nu}\cdot \mathbf{r}}\,e^{\mathrm{i}2\pi \boldsymbol{\nu}\cdot \mathbf{r}'}
  \mathrm{d}\boldsymbol{\nu}\,e^{-\mathrm{i}2\pi \boldsymbol{\nu}' \cdot \mathbf{r}'}\,\mathrm{d}\mathbf r'\\
       &=C \sum_{(k,m) \in \mathcal{I}} Z^{(2), lj}_{k,m}(\xi)\sum_{\boldsymbol{\nu}'\in S_n} \widehat{f_{[i]}}(\boldsymbol{\nu}')  \mathds{1}_{V_{k,m}}(\boldsymbol{\nu}')e^{-\mathrm{i}2\pi \boldsymbol{\nu}'\cdot \mathbf{r}}\\
       &=C \sum_{\boldsymbol{\nu}\in S_n} Z^{(2), lj}_{\boldsymbol{\nu}}(\xi) \, \widehat{f_{[i]}}(\boldsymbol{\nu}) \, e^{-\mathrm{i}2\pi \boldsymbol{\nu}\cdot \mathbf{r}},
\end{aligned}
\end{equation*}
where $\{Z^{(2), lj}_{\boldsymbol{\nu}}(\xi)\}$ are i.i.d.\ standard complex normal random variables and satisfy the conjugate-symmetry. 

Let 
\[
S^{+}_n=\Big\{\boldsymbol{\nu}\in\mathbb R^2:\ \|\boldsymbol{\nu}\|=\nu_n,\ 
\arg(\boldsymbol{\nu})\in\Big\{\frac{j\pi}{N_\theta}\Big\}_{j=0}^{N_\theta-1}
\Big\},\quad S^{(1),+}=\bigcup_{n=1}^NS^{+}_n.
\]
We rewrite that
\begin{equation*}
    \begin{aligned}
(W^{(1)}_{lj}\ast  \mathcal{N}_{n-1,[i]})(\mathbf{r}, \xi)&=C \sum_{ \boldsymbol{\nu}\in S^{(1),+}}\left( Z^{(1), lj}_{\boldsymbol{\nu}}(\xi)
  e^{-\mathrm{i}2\pi \boldsymbol{\nu}\cdot \mathbf{r}}\widehat{\mathcal{N}_{n-1,[i]}}(\boldsymbol{\nu})+\overline{ Z^{(1), lj}_{\boldsymbol{\nu}}(\xi)
  e^{-\mathrm{i}2\pi \boldsymbol{\nu}\cdot \mathbf{r}}\widehat{\mathcal{N}_{n-1,[i]}}(\boldsymbol{\nu})}\right),
      \end{aligned}
\end{equation*}
and
\begin{equation*}
    \begin{aligned}
      (W^{(2)}_{lj}\ast  \tilde{f}_{n,[i]})(\mathbf{r}, \xi)=  C \sum_{\boldsymbol{\nu}'\in S^{+}_n}\left( Z^{(2), lj}_{\boldsymbol{\nu}'}(\xi) \, \widehat{f_{[i]}}(\boldsymbol{\nu}') \, e^{-\mathrm{i}2\pi \boldsymbol{\nu}'\cdot \mathbf{r}}+\overline{Z^{(2), lj}_{\boldsymbol{\nu}'}(\xi) \, \widehat{f_{[i]}}(\boldsymbol{\nu}') \, e^{-\mathrm{i}2\pi \boldsymbol{\nu}'\cdot \mathbf{r}}}\right).
    \end{aligned}
\end{equation*}
Set
\[
X_{l,n,[i]}(\mathbf{r}, \xi)=\sum^L_{j=1}\Big((W^{(1)}_{lj}\ast \mathcal{N}_{n-1,[i]})(\mathbf{r}, \xi)+(W^{(2)}_{lj}\ast \tilde{f}_{n,[i]})(\mathbf{r}, \xi)\Big).
\]
Using the independence of $\{Z^{(2), lj}_{\boldsymbol{\nu}}\}$ and $\{Z^{(1), lj}_{\boldsymbol{\nu}}\}$ together with the identities $\mathbb{E}\left[(Z^{(q), lj}_{\boldsymbol{\nu}})^2\right]=0$ and $\mathbb{E}\left[|Z^{(q), lj}_{\boldsymbol{\nu}}|^2\right]=1$, we can derive the pre-activation covariance kernel function that
\begin{equation*}
    \begin{aligned}
     &\Sigma^{n,(1)}\left(\mathbf x_{n,[i]},\mathbf x_{n,[i']};\mathbf{r}, \mathbf{r}'\right)) \\
          &=\frac{1}{L} \mathbb{E}\left[X_{l,n,[i]}(\mathbf{r}, \xi)X_{l,n,[i']}(\mathbf{r}', \xi)\right] \\
      &=C^2 \sum_{ \boldsymbol{\nu}\in \mathcal{S}}\widehat{\mathcal{N}_{n-1,[i]}}(\boldsymbol{\nu})e^{-\mathrm{i}2\pi \boldsymbol{\nu}\cdot \mathbf{r}}\overline{\widehat{\mathcal{N}_{n-1,[i']}}(\boldsymbol{\nu})e^{-\mathrm{i}2\pi \boldsymbol{\nu}\cdot \mathbf{r}'}} +C^2 \sum_{\boldsymbol{\nu}\in S_n} \widehat{f_{[i]}}(\boldsymbol{\nu}) \, e^{-\mathrm{i}2\pi \boldsymbol{\nu}\cdot \mathbf{r}}\overline{\widehat{f_{[i']}}(\boldsymbol{\nu}) \, e^{-\mathrm{i}2\pi \boldsymbol{\nu}\cdot \mathbf{r}'}}.
    \end{aligned}
\end{equation*}
Moreover, we have the corelation function 
\begin{equation}\label{39}
    \begin{aligned}
& \rho_{n}\left(\mathbf x_{n,[i]},\mathbf x_{n,[i']};\mathbf{r}, \mathbf{r}'\right))\\
        &=\frac{\Sigma^{n,(1)}\left(\mathbf x_{n,[i]},\mathbf x_{n,[i']};\mathbf{r}, \mathbf{r}'\right))}{\sqrt{\Sigma^{n,(1)}\left(\mathbf x_{n,[i]}, \mathbf x_{n,[i]};\mathbf{r}, \mathbf{r}\right)\Sigma^{n,(1)}\left(\mathbf x_{n,[i']},\mathbf x_{n,[i']};\mathbf{r}, \mathbf{r}'\right)}}\\
        &=\frac{\sum_{ \boldsymbol{\nu}\in \mathcal{S}}\left(\widehat{\mathcal{N}_{n-1,[i]}}(\boldsymbol{\nu})e^{-\mathrm{i}2\pi \boldsymbol{\nu}\cdot \mathbf{r}}\overline{\widehat{\mathcal{N}_{n-1,[i']}}(\boldsymbol{\nu})e^{-\mathrm{i}2\pi \boldsymbol{\nu}\cdot \mathbf{r}'}} +  \widehat{f_{[i]}}(\boldsymbol{\nu}) \, e^{-\mathrm{i}2\pi \boldsymbol{\nu}\cdot \mathbf{r}}\overline{\widehat{f_{[i']}}(\boldsymbol{\nu}) \, e^{-\mathrm{i}2\pi \boldsymbol{\nu}\cdot \mathbf{r}'}} \mathds{1}_{S_n}(\boldsymbol{\nu})\right)}{\|\hat{\mathbf{f}}_{n,[i]}\|_{\mathcal S}\|\hat{\mathbf{f}}_{n,[i']}\|_{\mathcal S}}.
    \end{aligned}
\end{equation}
Combining Lemma~\ref{le1} with \eqref{39}, and following the standard NTK argument in~\cite{jac2018}, we obtain \eqref{28}. 
The details are standard and are omitted.
\end{proof}

\bibliographystyle{elsarticle-num-names}
\bibliography{reference}
\end{document}